\newtheorem{formula}{}[section]
\newtheorem{corollary}[formula]{Corollary}
\newtheorem{lemma}[formula]{Lemma}
\newtheorem{theorem}[formula]{Theorem}
\newtheorem{proposition}[formula]{Proposition}
\DeclareMathOperator{\aut}{Aut}
\DeclareMathOperator{\alt}{Alt}
\DeclareMathOperator{\sym}{Sym}
\DeclareMathOperator{\GaL}{{\rm \Gamma}L}
\DeclareMathOperator{\GL}{GL}
\DeclareMathOperator{\out}{Out}
\DeclareMathOperator{\PGaL}{P\Gamma L}
\DeclareMathOperator{\soc}{Soc}
\DeclareMathOperator{\F}{\mathbb{F}}
\DeclareMathOperator{\X}{\mathfrak{X}}
\DeclareMathOperator{\C}{\mathcal{C}}
\DeclareMathOperator{\krel}{\mathit{k}-rel}
\begin{document}

\title[Closures of permutation groups with restricted composition factors]{Closures of permutation groups with restricted nonabelian composition factors}
\author{Ilia Ponomarenko}
\address{Steklov Institute of Mathematics at St. Petersburg, Russia}
\email{inp@pdmi.ras.ru}

\author{Saveliy V. Skresanov}
\address{Novosibirsk State University, Novosibirsk 630090, Russia\\
Sobolev Institute of Mathematics, Novosibirsk 630090, Russia}
\email{skresan@math.nsc.ru}

\author{Andrey V. Vasil'ev}
\address{Novosibirsk State University, Novosibirsk 630090, Russia\\
Sobolev Institute of Mathematics, Novosibirsk 630090, Russia}
\email{vasand@math.nsc.ru}

\thanks{}
\date{}

\begin{abstract}
	Given a permutation group \( G \) on a finite set \( \Omega \), let \( G^{(k)} \) denote the \( k \)-closure of \( G \),
	that is, the largest permutation group on \( \Omega \) having the same orbits in the induced action on \( \Omega^k \) as \( G \).
	Recall that a group is \( \alt(d) \)-free if it does not contain a section isomorphic to the alternating group of degree~\( d \).
	Motivated by some problems in computational group theory, we prove that the \( k \)-closure of an \( \alt(d) \)-free group is again
	\( \alt(d) \)-free for \( k \geq 4 \) and \( d \geq 25 \). 
\end{abstract}

\maketitle

\section{Introduction}

Throughout all groups and sets on which they act are assumed to be finite.

One of the motivations for the present paper is a computational problem in which one needs to find efficiently the automorphism group
$\aut(\mathfrak S)$ of a given set $\mathfrak S$ of relations (generally speaking of different arities). In the case when
all the relations of $\mathfrak S$ are binary, this problem is equivalent to the famous Graph Isomorphism Problem and can be solved by the
Babai algorithm~\cite{B2015} in quasipolynomial time in the size of $\mathfrak S$.  It is currently unknown whether the  automorphism group of a
graph  can be found in polynomial time. It is surprising, however, that apparently the problem does not become easier if a sufficiently large
subgroup of  $\aut(\mathfrak S)$ is known in advance. In the present paper, we take a step towards the latter problem by studying
the structure of the group $\aut(\mathfrak S)$ in the case when all the relations of~$\mathfrak S$  are of arity at least~$4$. To be more
precise, we recall briefly the method of invariant relations that was introduced and developed by Wielandt in the late 1960s
\cite{Wielandt1969}, and used in the above-mentioned Babai's paper in the framework of multidimensional coherent configurations.

The method of invariant relations was  considered by Wielandt  as one of the main tools for studying  actions of a group on a set
\cite{Wielandt1969a}. This method is based on the concept of $k$-equivalence of permutation groups, where $k$ is a natural number. Namely,
two permutation groups $G$ and $H$ on a set $\Omega$ are said to be \emph{$k$-equivalent} if for every  $k$-ary relation
$s\subseteq\Omega^k$, we have $s^G=s$ if and only if $s^H=s$. In each class of $k$-equivalent groups there is a unique maximal (with respect
to inclusion) group, which is called the \emph{$k$-closure} of each group $G$ belonging to this class and is denoted by~$G^{(k)}$. Wielandt
proved \cite{Wielandt1969} (see also Proposition~\ref{basicProps} below) that
\begin{equation}\label{e:w}
\sym(\Omega)\ge G^{(1)}\ge G^{(2)}\ge\cdots \ge G^{(k)}=G^{(k+1)}=\cdots = G
\end{equation}
for some $k$. Thus, the closures of the group $G$ can be considered as approximations to~$G$.
On the other hand, the concept of the closure is a powerful tool for dealing with the above-mentioned
computational problem, because the $k$-closures are exactly the groups $\aut(\mathfrak S)$
where $\mathfrak S$ is the set of relations of arity at least~$k$ left invariant by the action of \( G \).

From the computational point of view, the $k$-closure problem consists in finding the $k$-closure of a given permutation group (it is
assumed that permutation groups are given by  generating sets, see \cite{Sere2002}). Note that the case $k=1$ is trivial, because the
$1$-closure of any permutation group~$G$ is the direct product of symmetric groups acting on the orbits of~$G$. In this setting,
polynomial-time algorithms for computing the $2$-closure were constructed for the nilpotent groups \cite{P1994}, the groups of odd
order~\cite{EP2001}, and the supersolvable groups~\cite{PV2020}. Very recently it was shown that the $3$-closure of a solvable group can also be 
computed in polynomial time~\cite{PV2024}.

The above computational results are based on some ``similarity'' of $k$-equivalent groups. It is clear from~\eqref{e:w} that the larger the
number $k$ is the more ``similar'' the $k$-equivalent groups are. In \cite{Wielandt1969}, Wielandt proved that if groups~$G$ and $H$ are
$k$-equivalent and $k\ge 1$ (respectively, $k\ge 2$, $k\ge m$), then $G$ is transitive (respectively, imprimitive, $m$-transitive) if and
only if so is~$H$. More interestingly, however, that $k$-equivalent groups are ``similar'' not only as permutation groups, but also as
abstract groups. For example, if $G$ is an abelian group (respectively, a $p$-group, a group of odd order), then, for $k\ge 2$, the
group~$H$ is an abelian group (respectively, a $p$-group, a group of odd order), see \cite{Wielandt1969}. Recently, it was
proved~\cite{BPVV2021} that a similar statement is true for solvable groups if $k\ge 3$ (the example of $2$-transitive solvable groups shows
that $k=2$ cannot be taken here). Each of these statements expresses the fact that the $k$-closure (for the corresponding $k$) of any
permutation group of the corresponding class also lies in it. This fact, a polynomial upper bound on the orders of solvable primitive groups
\cite{Pal1982}, and the Babai--Luks algorithm \cite{BL1983} form a foundation of the computational results from the previous paragraph
(except for the case of supersolvable groups, where the algorithm is constructed on more subtle arguments, because the $2$-closure of a
supersolvable group need not be even solvable).

In fact, the Babai--Luks algorithm \cite{BL1983} can be applied in a much broader situation, for the class of $\alt(d)$-free groups. Recall that $H$ is a {\em section} of a group $G$ if $H$ is a homomorphic image of a subgroup of $G$. Now a group is {\em$\alt(d)$-free}, $d\geq5$, if it does not contain a section isomorphic to the alternating group of degree~$d$. In particular, solvable groups are $\alt(5)$-free. Furthermore, the classical result by Babai, Cameron, and P{\'a}lfy \cite{BCP1982} states, up to the language (see, e.g., a remark before Theorem III in \cite{PS1997}), that the size of an $\alt(d)$-free primitive permutation group of degree $n$ is bounded from above by $n^c$, where $c$ is a constant depending only on~$d$. Thus, in the context of the previous paragraph, we arrive at the question when the $k$-closure of an $\alt(d)$-free group is also $\alt(d)$-free. The goal of the present paper is to answer this question even for a wider class of groups.

To proceed, we recall another concept introduced by Wielandt \cite[Definition~11.3]{Wie1964}. A class $\X$ of (abstract) groups is said to
be {\em  complete} if it is closed with respect to taking subgroups, quotients, and extensions. The examples are the class of all groups,
$p$-groups for any fixed prime $p$, and solvable groups. In fact, given a fixed set $S$ (finite or infinite) of simple groups, the class of
groups whose composition factors are sections of groups from $S$ is complete. 

We are ready to state our main result.

\begin{theorem}\label{t:main}
	Let $\X$ be a complete class including all $\alt(25)$-free groups.  Then the $k$-closure of every permutation
	group from $\X$ belongs to $\X$ for each $k\geq4$.
\end{theorem}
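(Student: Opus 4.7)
The plan is induction on \(|\Omega|\), reducing first to the primitive case via the standard intransitive and imprimitive reductions, and then attacking the primitive case via the O'Nan-Scott classification combined with the Babai--Cameron--P\'alfy theorem.

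For the intransitive reduction, let \(\Omega_1,\dots,\Omega_r\) be the \(G\)-orbits. Since \(k\geq1\), each \(\Omega_i\) is also \(G^{(k)}\)-invariant, and a standard argument yields \((G^{(k)})^{\Omega_i}\leq(G^{\Omega_i})^{(k)}\). As \(G^{\Omega_i}\in\X\) (being a quotient of a subgroup of \(G\)) and \(|\Omega_i|<|\Omega|\), the inductive hypothesis gives \((G^{\Omega_i})^{(k)}\in\X\). The embedding \(G^{(k)}\hookrightarrow\prod_i(G^{\Omega_i})^{(k)}\) and completeness of \(\X\) then yield \(G^{(k)}\in\X\). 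The imprimitive reduction is analogous: if \(\Sigma\) is a nontrivial \(G\)-block system, then, by Wielandt, \(\Sigma\) is also a \(G^{(k)}\)-block system since \(k\geq 2\); letting \(K\) be the kernel of \(G^{(k)}\) on \(\Sigma\), we have \(G^{(k)}/K\leq(G^\Sigma)^{(k)}\) and \(K\hookrightarrow\prod_{\Delta\in\Sigma}(G_\Delta^\Delta)^{(k)}\), with both right-hand sides in \(\X\) by induction. Completeness under extensions finishes the case.

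This leaves the primitive case. For primitive \(G\in\X\), I would apply O'Nan-Scott to classify \(G\) by the structure of \(\soc(G)=T^m\) and handle the types (affine, almost simple, simple diagonal, product action, twisted wreath) separately. The goal in each case is to show that the nonabelian composition factors of \(\soc(G^{(k)})\) coincide with those of \(\soc(G)\), hence lie in \(\X\) by hypothesis. Since \(G^{(k)}/\soc(G^{(k)})\) embeds into \(\out(\soc(G^{(k)}))\), which is solvable by Schreier's conjecture, the quotient is automatically in \(\X\), and extension-closure then gives \(G^{(k)}\in\X\). The Babai--Cameron--P\'alfy bound enters here to control primitive \(\alt(d)\)-free groups and pin down which socle types can actually arise in \(\X\).

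The main obstacle will be this primitive case, particularly the product-action and twisted-wreath types, where the \(k\)-closure could a priori permute the simple socle factors more freely than \(G\) does and so introduce new alternating composition factors through the top group. The numerical thresholds are calibrated to this analysis: the choice \(k\geq4\) is what ensures that \(4\)-ary relations carry enough information to recover the socle decomposition (the solvable case is known to fail at \(k=2\), and analogous obstructions are expected at \(k=3\) in the present setting), while the threshold \(d\geq25\) arises from combining the polynomial Babai--Cameron--P\'alfy bound with an explicit ruling-out of small-degree primitive actions of alternating groups, so as to guarantee that no new large alternating composition factor appears in \(G^{(k)}\) that was not already present in \(G\).
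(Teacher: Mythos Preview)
Your reductions to the transitive and primitive cases are fine, and the paper does something similar (citing \cite{PV2024} to go all the way to primitive \emph{basic} groups). But the primitive analysis you sketch has two substantial gaps.

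First, the Schreier step is wrong. You claim $G^{(k)}/\soc(G^{(k)})$ embeds in $\out(\soc(G^{(k)}))$, which is ``solvable by Schreier's conjecture''. Schreier gives solvability of $\out(T)$ for $T$ simple, but $\out(T^m)\cong\out(T)\wr\sym(m)$, which is not solvable once $m\geq5$. So in diagonal, product-action, and twisted-wreath types the quotient can have arbitrarily large alternating composition factors coming from the top group, and you cannot dispose of it by Schreier. The paper handles the diagonal case differently: either the top group already contains $\alt(m)$ (so $\X$ contains it anyway), or a base-size result of Fawcett forces $G^{(3)}=G$. Product-action is removed by reducing to basic groups via the closure formula for wreath products in product action.

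Second, and more seriously, you have missed where almost all of the work lies: the affine case. After reducing to basic primitive groups and invoking Praeger--Saxl to get $\soc(G)=\soc(G^{(4)})$ (ruling out the $4$-transitive exceptions, which is where $n>24$ first appears), the almost-simple and diagonal cases are short; what remains is an affine group $G=V\rtimes G_0$ with $G_0\leq\GL(V)$. Here the paper runs Aschbacher's classification on $G_0$ inside $\GaL_a(q)$, using the results of Xu--Giudici--Li--Praeger that $k$-closures respect Aschbacher classes. Classes $\C_1$--$\C_5$, $\C_7$, $\C_8$ are eliminated by induction via new closure results for tensor and subfield stabilizers (Section~3 of the paper). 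The real content is $\C_6$ (symplectic-type normalizer), handled by a regular-orbit\,/\,base-size argument showing $G^{(4)}=G$ once $m\geq12$ and $\alt(25)$-freeness directly when $m<12$, and $\C_9$ (almost simple modulo centre), handled by a separate $3$-equivalence argument. The constant $25$ comes out of the $\C_6$ analysis (via $\GL_{2m}(r)$ having no $\alt(25)$ section for $m<12$) and the $\C_9$ analysis (via the classification of $3$-transitive groups), not from Babai--Cameron--P\'alfy, which the paper uses only in the concluding corollary about orders, not in the proof of the theorem itself.
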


Since the class of all $\alt(d)$-free groups is complete, the statement below is an immediate consequence of Theorem~\ref{t:main}.

\begin{corollary}\label{c:main}
	If $G$ is an $\alt(d)$-free group with $d\geq25$, then $G^{(k)}$ is $\alt(d)$-free group for $k\geq4$.
\end{corollary}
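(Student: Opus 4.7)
The plan is to deduce the corollary directly from Theorem~\ref{t:main} by taking \(\X\) to be the class of all \(\alt(d)\)-free groups and verifying the two hypotheses of that theorem: that \(\X\) is complete, and that \(\X\) contains every \(\alt(25)\)-free group.

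The inclusion is the easier of the two. Since \(\alt(25)\) embeds in \(\alt(d)\) whenever \(d \ge 25\), any group possessing an \(\alt(d)\)-section also possesses an \(\alt(25)\)-section (obtained by pulling back a copy of \(\alt(25) \le \alt(d)\) through the defining surjection). Contrapositively, every \(\alt(25)\)-free group is \(\alt(d)\)-free, giving the desired inclusion.

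Completeness requires more care. Closure of the \(\alt(d)\)-free class under subgroups and quotients is immediate, since a section of a section is again a section. The nontrivial step is closure under extensions, and I would argue it by contradiction, using the simplicity of \(\alt(d)\) for \(d \ge 5\) as the key lever. Suppose \(N \triangleleft G\) with both \(N\) and \(G/N\) being \(\alt(d)\)-free, while \(G\) admits a section \(H/K \cong \alt(d)\). Consider \(M = K(H \cap N)\); then \(K \le M \le H\) and \(M/K\) is normal in the simple group \(H/K\), so either \(M = K\) or \(M = H\). In the first case \(H \cap N \le K\), so the restriction to \(H\) of the quotient map \(G \to G/N\) descends through \(H/K\), placing an \(\alt(d)\)-section inside \(G/N\). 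In the second case \(H = K(H \cap N)\), and the second isomorphism theorem identifies \((H \cap N)/(K \cap N)\) with \(H/K \cong \alt(d)\), yielding an \(\alt(d)\)-section inside \(N\). Either outcome contradicts the hypothesis, establishing completeness.

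With both conditions verified, Theorem~\ref{t:main} applied to \(\X\) delivers the corollary: for every \(\alt(d)\)-free permutation group \(G\) and every \(k \ge 4\), the closure \(G^{(k)}\) remains \(\alt(d)\)-free. The only delicate step is the extension argument, where the simplicity of the alternating groups of degree at least five does all the real work; everything else is bookkeeping against the definition of a section.
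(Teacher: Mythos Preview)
Your proposal is correct and takes exactly the same route as the paper: apply Theorem~\ref{t:main} with \(\X\) the class of \(\alt(d)\)-free groups, noting that this class is complete and contains all \(\alt(25)\)-free groups when \(d\ge 25\). The paper simply asserts completeness as a known fact in one sentence, whereas you spell out the extension argument; your version is a faithful expansion of what the paper leaves implicit.
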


Note that the constant $4$ in Theorem~\ref{t:main} is the best possible and, if $k=4$, then the same holds true for the constant $25$.
This follows from the two examples below.%
\medskip

(i) The affine group $G=\mathrm{AGL}_m(2)$ is $3$-transitive in its natural action on a linear space of dimension $m\geq2$ over the
field of order~$2$.  It follows that \( G^{(3)} = \sym(2^m) \), so the theorem does not hold for $k\leq3$.
\smallskip

(ii) The Mathieu group \( G = M_{24} \) is $\alt(9)$-free and acts on 24 points 5-transitively, so \( G^{(4)} = \sym(24) \) is not
$\alt(24)$-free.
\medskip

It should be also noted that there are a lot of complete classes of groups distinct from the class of $\alt(d)$-free groups, where
$d\geq25$, that satisfy the hypothesis of Theorem~\ref{t:main}. Indeed, consider any nonabelian simple group $S$ containing a section
isomorphic to $\alt(d)$. Then the class of all groups whose nonabelian factors are either $\alt(25)$-free or sections of $S$ is such an example.

Let us now briefly discuss the strategy of the proof of the main result. Suppose that $\X$ is a complete class of groups satisfying the
hypothesis of Theorem~\ref{t:main} and ${k\geq4}$. We are interested in the question of when $G^{(k)} \in \X$ for any
permutation group $G\in \X$. Note that along with each pair of permutation groups, the class $\X$ contains their transitive and intransitive
direct products, as well as their imprimitive and primitive wreath products. Explicit formulas for the $k$-closures of
these products are known, see \cite{KK,PV2020}, hence our question is reduced to the case when the group $G$ is
primitive and even basic \cite{PV2024}. Recall that a permutation group is  \emph{nonbasic} if it is contained in a wreath product in the
product action; it is \emph{basic} otherwise, see~\cite[Section~4.3]{CB}.

Apparently, the first nontrivial results on the closures of primitive groups were obtained in \cite{LPS88,PS1992}, where it was proved that
in most cases the socle of the group is preserved when taking the $k$-closure, and all the cases were described when this is not so. These
results allow us to reduce the situation to the case of  the affine groups (see Section~\ref{secProof}). When working with
affine groups, it is natural to use Aschbacher’s classification \cite{A1984}, which divides them structurally into $9$ classes. An important
step in the study of $k$-closures of affine groups was made in~\cite{Xu2006,XuGLP2011}, where for each Aschbacher class an explicit value
for the number $k$ was found such that the $k$-closure of each group from this class remains in the same class. Following the Aschbacher
classification, we consider the classes to which group $G$ might belong. While excluding the Aschbacher classes related with
tensor decompositions of the underlying linear space, we need some new tools to control the
$k$-equivalence and $k$-closure for stabilizers of the tensor decompositions, which are the subject of Section~\ref{secTensor}. Finally
(Subsections~\ref{ss:c6} and~\ref{ss:c9}), we exclude the cases when the group  $G$ is  of symplectic type  or belongs to the last
Aschbacher class (the classes \( \C_6 \) and \( \C_9 \) in the notation in~\cite{XuGLP2011}), thus completing the proof of
the theorem.

To make the paper as self-contained as possible, we cite the relevant results concerning the $k$-equivalence and $k$-closure of permutation
groups in Section~\ref{secPre}. The concluding remarks are collected at the end of the paper (Section~\ref{s:end}).

\section{Preliminaries}\label{secPre}

Let \( G \) be a permutation group on \( \Omega \), and let \( \krel(G) \), \( k \geq 1 \), denote
the set of \( G \)-invariant \( k \)-ary relations.

\begin{lemma}{\em\cite[Theorem~4.7]{Wielandt1969}}\label{krellem}
	Let \( G \) and \( H \) be permutation groups on the same set \( \Omega \),
	and \( k \geq 1 \). Then \( \krel(H) \subseteq \krel(G) \) if and only if for any
	\( x_1, \dots, x_k \in \Omega \) and any \( g \in G \) there exists some \( h \in H \)
	such that \( x_i^g = x_i^h \) for all \( i = 1, \dots, k \).
\end{lemma}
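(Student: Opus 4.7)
The plan is to reduce both directions to the diagonal action of $H$ on $\Omega^k$ and to exploit the elementary fact that the $H$-invariant $k$-ary relations are exactly the unions of $H$-orbits on $\Omega^k$. Neither direction should require more than a short argument.

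For the \emph{if} direction, I would assume the pointwise matching condition and fix an arbitrary $s \in \krel(H)$ together with $g \in G$. For every $(x_1,\dots,x_k) \in s$ one can choose $h \in H$ with $x_i^g = x_i^h$ for all $i$, so $(x_1^g,\dots,x_k^g) = (x_1^h,\dots,x_k^h) \in s$ since $s$ is $H$-invariant. This gives $s^g \subseteq s$; applying the same argument with $g^{-1}$ in place of $g$ yields $s^g = s$, hence $s \in \krel(G)$.

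For the \emph{only if} direction, I would turn the construction around: given $x_1,\dots,x_k \in \Omega$ and $g \in G$, let $s \subseteq \Omega^k$ be the orbit of $(x_1,\dots,x_k)$ under the diagonal action of $H$. Then $s \in \krel(H) \subseteq \krel(G)$, so $(x_1^g,\dots,x_k^g) \in s^g = s$, which exhibits an $h \in H$ with $(x_1^h,\dots,x_k^h) = (x_1^g,\dots,x_k^g)$, i.e., $x_i^g = x_i^h$ for each $i$.

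There is no genuine obstacle here, since the lemma is essentially a repackaging of the fact that the minimal $H$-invariant $k$-relations are precisely the $H$-orbits on $\Omega^k$; the only point requiring a moment's care is the symmetric passage from $s^g \subseteq s$ to $s^g = s$ using $g^{-1}$, which is routine.
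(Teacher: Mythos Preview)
Your argument is correct in both directions; the orbit-on-$\Omega^k$ trick for the \emph{only if} direction and the $g^{-1}$ symmetry for the \emph{if} direction are exactly the standard moves. The paper does not supply its own proof of this lemma---it simply cites \cite[Theorem~4.7]{Wielandt1969}---so there is nothing further to compare against.
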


We say that \( G \) is \emph{\( k \)-equivalent} to \( H \) if \( \krel(G) = \krel(H) \).
Observe that the \( k \)-closure of \( G \) is the largest permutation group on \( \Omega \) which is \( k \)-equivalent to \( G \).
Recall that a \emph{base} of a permutation group is a subset whose pointwise stabilizer is trivial.
We collect some basic properties of \( k \)-closures and \( k \)-equivalence which will be used all throughout the text.

\begin{proposition}\label{basicProps}
	Let \( G \) and \( H \) be permutation groups on \( \Omega \). 
	\begin{enumerate}[{\rm(i)}]
		\item For \( k \geq 1 \), if \( G \) and \( H \) are \( (k+1) \)-equivalent,
			then they are \( k \)-equivalent. In particular, \( G^{(k+1)} \leq G^{(k)} \).
		\item For \( k \geq 2 \), if \( G \) and \( H \) are \( k \)-equivalent, then their orbits and systems of imprimitivity are the same.
		\item For \( k \geq 2 \) and \( \alpha \in \Omega \), if \( G \) and \( H \) are \( k \)-equivalent,
			then \( G_\alpha \) is \( (k-1) \)-equivalent to \( H_\alpha \).
		\item For \( k \geq 1 \), if \( G \leq H \), then \( G^{(k)} \leq H^{(k)} \).
		\item For \( k \geq 1 \), we have \( (G^{(k)})^{(k)} = G^{(k)} \).
		\item Suppose \( G \) has a base of size \( k \geq 1 \), then \( G^{(k+1)} = G \).
			In particular, if \( G \) has a faithful regular orbit, then \( G^{(2)} = G \).
	\end{enumerate}
\end{proposition}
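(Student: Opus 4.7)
The plan is to use Lemma~\ref{krellem} as the workhorse together with the defining property of the $k$-closure as the unique maximal group in its $k$-equivalence class. I would keep at hand the following reformulation, read off from Lemma~\ref{krellem}: $G$ and $H$ are $k$-equivalent if and only if for every tuple $(x_1,\dots,x_k)\in\Omega^k$ and every element of either group, there is an element of the other group agreeing with it on all $x_i$.

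For (i), I would specialize a $(k+1)$-tuple as $(x_1,\dots,x_k,x_k)$ in the above criterion; this passes from $(k+1)$-equivalence to $k$-equivalence. The inclusion $G^{(k+1)}\leq G^{(k)}$ follows because $G^{(k+1)}$ is $(k+1)$-equivalent to $G$, hence $k$-equivalent to $G$, hence contained in the maximal such group $G^{(k)}$. For (iv), $G\leq H$ gives $\krel(H)\subseteq\krel(G)$, so a permutation preserving every relation in $\krel(G)$ a fortiori preserves every relation in $\krel(H)$; thus $G^{(k)}\leq H^{(k)}$ from the description of the $k$-closure as the full stabilizer of $\krel(G)$. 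For (v), apply (iv) to $G\leq G^{(k)}$ to obtain $G^{(k)}\leq(G^{(k)})^{(k)}$, while conversely $(G^{(k)})^{(k)}$ is $k$-equivalent to $G^{(k)}$, hence to $G$, hence contained in $G^{(k)}$ by maximality.

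Part (ii) reduces to recognizing which structures are encoded by low-arity invariant relations: orbits correspond to $1$-ary $G$-invariant subsets of $\Omega$ (a set is $G$-invariant exactly when it is a union of orbits), and systems of imprimitivity correspond to $G$-invariant equivalence relations, which are $2$-ary. By (i), $k$-equivalence with $k\geq 2$ implies $2$-equivalence and hence $1$-equivalence, so both structures agree. Part (iii) is a direct application of the combinatorial criterion with the fixed point placed first in the tuple: given $g\in G_\alpha$ and $x_1,\dots,x_{k-1}\in\Omega$, apply $k$-equivalence to $(\alpha,x_1,\dots,x_{k-1})$ and $g$ to get $h\in H$ with $\alpha^h=\alpha$ (so $h\in H_\alpha$) and $x_i^h=x_i^g$; symmetry yields the reverse inclusion.

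The only part requiring real care is (vi), and this is where the base hypothesis does the work. Fix a base $\{x_1,\dots,x_k\}$ of $G$ and take $h\in G^{(k+1)}$. For each $z\in\Omega$, the $(k+1)$-equivalence of $G$ and $G^{(k+1)}$, applied to the tuple $(x_1,\dots,x_k,z)$ and the element $h$, produces $g_z\in G$ with $x_i^{g_z}=x_i^h$ for every $i$ and $z^{g_z}=z^h$. The base hypothesis forces $g_z$ to be unique: any two such elements of $G$ would differ by an element of $G$ pointwise fixing the base, necessarily the identity. Hence $g_z$ does not depend on $z$; calling it $g$, we obtain $z^h=z^g$ for every $z\in\Omega$, so $h=g\in G$. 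The ``in particular'' clause is the case $k=1$, since a faithful regular orbit supplies a point with trivial stabilizer, i.e.\ a base of size one. The main obstacle I anticipate is bookkeeping---keeping straight which direction of the Lemma~\ref{krellem} criterion is needed in each part, especially in (iii) and (vi); beyond the uniqueness trick driven by the base condition in (vi), no deeper idea seems to be required.
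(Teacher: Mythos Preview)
Your argument is correct in all six parts; the uniqueness trick in (vi) via the base condition is exactly the right idea, and the bookkeeping in (iii) and (iv) is handled cleanly. The paper does not actually prove this proposition: it simply cites the relevant theorems from Wielandt's 1969 lecture notes, so your self-contained derivation from Lemma~\ref{krellem} and the maximality property of the $k$-closure is more informative than what appears in the paper, though of course it recovers the same classical facts.
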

\begin{proof}
	Parts~(i)-(vi) follow from~ Theorems~4.3, 4.11, 4.12, 5.7, 5.9, and~5.12 of \cite{Wielandt1969}, respectively.
\end{proof}

For \( \Delta \subseteq \Omega \) let \( G^\Delta \) denote the group induced on \( \Delta \) by the setwise stabilizer of \( \Delta \) in \( G \).
We may view \( G^\Delta \) as a subgroup of \( \sym(\Delta) \).

Let \( \Delta \) be some block of imprimitivity for a transitive permutation group \( G \). Translates of \( \Delta \)
give a system of imprimitivity of \( G \) which we will denote by \( \Omega/\Delta \). Let \( G^{\Omega/\Delta} \) denote
the permutation group induced by \( G \) on \( \Omega/\Delta \) by permuting the blocks. Clearly, \( G^\Delta \) and \( G^{\Omega/\Delta} \)
are sections of \( G \), so if \( G \) lies in some complete class~\( \X \) then \( G^\Delta \in \X \) and \( G^{\Omega/\Delta} \in \X \).
Moreover, \( G \) can be embedded into the \emph{wreath product in imprimitive action} \( G^\Delta \wr G^{\Omega/\Delta} \).

Given two permutation groups \( L \leq \sym(\Delta) \) and \( K \leq \sym(\Gamma) \), let \( L \uparrow K \leq \sym(\Delta^\Gamma) \) denote
the \emph{wreath product in product action}. As an abstract group it is isomorphic to \( L^{|\Gamma|} \rtimes K \), where
\( L^{|\Gamma|} \) acts on \( \Delta^\Gamma \) coordinatewise, while \( K \) permutes the coordinates. A primitive group \( G \) is called \emph{basic}
if it cannot be embedded into the wreath product in product action for \( |\Delta| > 1 \), \( |\Gamma| > 1 \).

Let \( G \leq \sym(\Omega) \) be a primitive permutation group preserving a nontrivial product decomposition \( \Omega = \Delta^\Gamma \).
Without loss of generality we may assume that \( \Gamma = \{ 1, \dots, m \} \) for some \( m > 1 \). For \( \gamma \in \Gamma \) define a partition of \( \Omega \) into \( |\Delta| \) parts
\[ \Delta_\gamma = \{ \Delta \times \dots \times \Delta \times \{ x \} \times \Delta \times \dots \times \Delta \mid x \in \Delta \}, \]
i.e.\ each part of \( \Delta_\gamma \) consists of those tuples from \( \Delta^\Gamma \) which have its \( \gamma \)-th coordinate constant.
There is a natural bijective correspondence between the partitions \( \Delta_\gamma \), \( \gamma \in \Gamma \), and the points of \( \Gamma \).
The group \( G \) permutes the partitions \( \Delta_\gamma \), \( \gamma \in \Gamma \), so let \( G^\Gamma \) denote the permutation group on \( \Gamma \)
obtained through this correspondence. With some abuse of notation, let \( G^{\Delta_\gamma} \) denote the permutation group on \( \Delta \) induced
by the stabilizer of \( \Delta_\gamma \) in~\( G \). In the terminology of~\cite[Section~5.3]{PSn2018}, the group \( G^{\Delta_\gamma} \) is the \emph{\( \gamma \)-component} of \( G \),
and there is an explicit formula for it~\cite[(5.11)]{PSn2018}.

Clearly, \( G^\Gamma \) and \( G^{\Delta_\gamma} \) are sections of \( G \), and~\cite[Theorem~5.14~(ii)]{PSn2018}
implies that \( G \) can be embedded into the wreath product \( G^{\Delta_\gamma} \uparrow G^\Gamma \) in product action.

To compute the \( k \)-closure of a group in product action we need another closure operator. Observe that a permutation group \( K \leq \sym(\Gamma) \)
acts on the set of ordered partitions of \( \Gamma \) in at most \( r \geq 1 \) classes. Let \( K^{[r]} \) denote the largest permutation group
on \( \Gamma \) having the same orbits on the set of ordered partitions into at most \( r \) parts as \( K \). This new operator, which we call
the \emph{\( r \)-closure with respect to partitions}, is indeed a closure operator and enjoys some properties similar to the \( k \)-closure, see~\cite[Section~3]{PV2021}.
In particular, \( K^{[r+1]} \geq K^{[r]} \) and \( K^{[m]} = K \) for \( m = |\Gamma| \).

There is a direct connection between these two types of closures.

\begin{proposition}[{\cite[Lemma~3.2]{PV2021}}]\label{partClkCl}
	Let \( K \leq \sym(\Gamma) \) be a permutation group. For \( k \geq 1 \) we have \( K^{[k+1]} \leq K^{(k)} \).
\end{proposition}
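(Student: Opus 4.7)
The plan is to reduce the inclusion to the orbit criterion for $K^{(k)}$. By the definition of the $k$-closure (equivalently by Lemma~\ref{krellem}), to show that a permutation $g\in\sym(\Gamma)$ lies in $K^{(k)}$ it suffices to verify that for every tuple $(x_1,\dots,x_k)\in\Gamma^k$ some element $h\in K$ satisfies $x_i^h=x_i^g$ for all $i$. Accordingly, I would fix $g\in K^{[k+1]}$ and an arbitrary tuple $(x_1,\dots,x_k)\in\Gamma^k$, and the goal becomes to produce such an $h$.

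The key step is to encode the tuple as an ordered partition of $\Gamma$ with at most $k+1$ classes. Let $y_1,\dots,y_r$ be the distinct entries of $(x_1,\dots,x_k)$ listed in the order of their first appearance, so $r\leq k$, and set
\[ P(x_1,\dots,x_k) \;=\; (\{y_1\},\dots,\{y_r\},\,\Gamma\setminus\{y_1,\dots,y_r\}), \]
an ordered partition of $\Gamma$ into at most $k+1$ nonempty classes. A direct check---using that the property ``position $j$ is the first occurrence of its value'' is preserved by any $f\in\sym(\Gamma)$---shows that the natural action of $f$ on ordered partitions sends $P(x_1,\dots,x_k)$ to $P(x_1^f,\dots,x_k^f)$.

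In particular, writing $P:=P(x_1,\dots,x_k)$, the image $g\cdot P$ equals $P(x_1^g,\dots,x_k^g)$. Since $g\in K^{[k+1]}$ and $P$ has at most $k+1$ classes, $P$ and $g\cdot P$ lie in the same $K$-orbit, so some $h\in K$ satisfies $h\cdot P=g\cdot P$. Matching the $i$-th singleton block on the two sides forces $y_i^h=y_i^g$ for $i=1,\dots,r$, and because every $x_j$ equals some $y_i$, one obtains $x_j^h=x_j^g$ for all $j$, which is exactly what was needed. I do not anticipate a serious obstacle; the only point requiring care is a convention check that ``ordered partitions into at most $k+1$ classes'' in the definition of $K^{[k+1]}$ (as used in~\cite{PV2021}) is compatible with our $P$, which has $r+1\leq k+1$ nonempty parts---this is the standard convention.
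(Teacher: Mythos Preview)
The paper does not give its own proof of this proposition; it simply cites \cite[Lemma~3.2]{PV2021}. Your argument is correct and is the natural one: encoding a $k$-tuple by the ordered partition of $\Gamma$ into its distinct entries (as singletons) together with the complement yields a partition into at most $k+1$ classes, and equality of the $K$-orbit on this partition recovers the action on the underlying tuple. The only cosmetic point is the edge case $\{y_1,\dots,y_r\}=\Gamma$, where the ``complement'' block is empty and $P$ has $r\leq k$ parts rather than $r+1$; this is harmless since you are still within ``at most $k+1$'' parts, but your final parenthetical slightly overstates the count. Otherwise there is nothing to add---your proof is self-contained and would serve as a replacement for the bare citation.
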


Let \( \mathrm{Orb}_k(K) \) denote the set of orbits of \( K \leq \sym(\Gamma) \) on \( \Gamma^k \).

\begin{proposition}\label{basicEmb}
	Let \( G \) and \( H \) be permutation groups on \( \Omega \). 
	\begin{enumerate}[{\rm(i)}]
		\item Let \( \Delta \subset \Omega \) be an orbit of \( G \). For any \( k \geq 1 \) we have \( (G^{(k)})^\Delta \leq (G^\Delta)^{(k)} \).
			Moreover, if \( H \) is \( k \)-equivalent to \( G \) then \( G^\Delta \) is \( k \)-equivalent to \( H^\Delta \).
		\item Let \( G \) be a transitive imprimitive group with a nontrivial block \( \Delta \subset \Omega \).
			For any \( k \geq 2 \),
			\[ (G^\Delta \wr G^{\Omega/\Delta})^{(k)} = (G^\Delta)^{(k)} \wr (G^{\Omega/\Delta})^{(k)}.  \]
			Moreover, if \( H \) is \( k \)-equivalent to \( G \), then \( H \) is also imprimitive, and \( G^\Delta \) is \( k \)-equivalent to \( H^\Delta \)
			and \( G^{\Omega/\Delta} \) is \( k \)-equivalent to \( H^{\Omega/\Delta} \).
		\item Let \( G \) be a primitive nonbasic group preserving a nontrivial product decomposition \( \Omega = \Delta^\Gamma \),
			and let \( \gamma \in \Gamma \). For any \( k \geq 2 \) we have
			\[ (G^{\Delta_\gamma} \uparrow G^\Gamma)^{(k)} = (G^{\Delta_\gamma})^{(k)} \uparrow (G^\Gamma)^{[r]}, \]
			where \( r = \min \{ |\mathrm{Orb}_k(G^{\Delta_\gamma})|,\, |\Gamma| \} \).
			Moreover, if \( H \) is \( k \)-equivalent to \( G \), then \( H \) is also nonbasic, and \( G^{\Delta_\gamma} \) is \( k \)-equivalent to \( H^{\Delta_\gamma} \)
			and \( (G^\Gamma)^{[r]} = (H^\Gamma)^{[r]} \).
	\end{enumerate}
\end{proposition}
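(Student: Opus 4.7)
The plan is to prove the three parts independently, noting that each is an embedding statement followed by a ``moreover'' clause asserting that $k$-equivalence is inherited by the relevant substructure; the pattern is uniform, with the three parts differing only in whether the substructure being restricted to is an orbit, a block of imprimitivity, or a factor of a product decomposition. In each part, the inclusion is proved via Lemma~\ref{krellem}, while the moreover clause combines Proposition~\ref{basicProps}(ii) (identifying the preserved structure) with the inclusion just proved.

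For (i), I would first observe that $G^{(k)} \leq G^{(1)}$, and that $G^{(1)}$ is the direct product of the symmetric groups on the orbits of $G$; in particular every element of $G^{(k)}$ stabilizes $\Delta$ setwise, so $(G^{(k)})^\Delta$ is well defined. To prove $(G^{(k)})^\Delta \leq (G^\Delta)^{(k)}$, pick $g \in G^{(k)}$ and $x_1,\dots,x_k \in \Delta$; the $k$-equivalence of $G$ and $G^{(k)}$ furnishes $\tilde g \in G$ with $x_i^{\tilde g} = x_i^g$ for all $i$, and since $G$ itself stabilizes the orbit $\Delta$, the restriction $\tilde g|_\Delta \in G^\Delta$ verifies the Wielandt criterion of Lemma~\ref{krellem}. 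The moreover clause is the same argument applied symmetrically to the pair $(G,H)$.

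For (ii), I would deduce the equality from the classical Kaluzhnin--Klin formula as proved in the required form in~\cite{KK, PV2020}. The nontrivial inclusion $\supseteq$ requires constructing, given $\tilde g \in (G^\Delta)^{(k)}$ and $\tilde h \in (G^{\Omega/\Delta})^{(k)}$, an element of $W = G^\Delta \wr G^{\Omega/\Delta}$ that agrees with $(\tilde g,\tilde h)$ on a prescribed $k$-tuple of $\Omega$; this uses the decomposition of a $k$-tuple into its base-coordinate and block-coordinate parts under the imprimitive action. The reverse inclusion uses Proposition~\ref{basicProps}(ii) to force any element of $W^{(k)}$ into $\sym(\Delta) \wr \sym(\Omega/\Delta)$ and then applies part (i) to the induced actions on a fixed block and on $\Omega/\Delta$. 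The moreover clause is a parallel application of part (i) to both components.

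Part (iii) is the most delicate, and pinning down the correct partition index $r$ is the main obstacle. The key combinatorial observation is that a $k$-tuple in $\Omega = \Delta^\Gamma$ is a function $\Gamma \to \Delta^k$, and the data visible to a $G$-invariant $k$-ary relation on $\Omega$ depends on this function only through its composition with the orbit map $\Delta^k \to \mathrm{Orb}_k(G^{\Delta_\gamma})$; since the image of this composition has at most $\min\{|\mathrm{Orb}_k(G^{\Delta_\gamma})|, |\Gamma|\} = r$ values, only the $G^\Gamma$-action on ordered partitions of $\Gamma$ into at most $r$ parts is detected, which is exactly the partition closure $(G^\Gamma)^{[r]}$ and not the ordinary $k$-closure. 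I would then deduce both inclusions of the asserted equality from the explicit formula in~\cite[Section~3]{PV2021} together with Proposition~\ref{partClkCl}, and derive the moreover clause by combining Proposition~\ref{basicProps}(ii) (the cartesian decomposition corresponds to preserved systems of imprimitivity, hence is visible to $k$-equivalence for $k\geq2$) with part (i) applied to the stabilizer of a coordinate partition $\Delta_\gamma$.
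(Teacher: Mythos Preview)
Your treatment of parts (i) and (ii) is essentially the same as the paper's: part~(i) is the observation that $\krel(G^\Delta)=\krel(G)\cap\Delta^k$ (equivalently, your Lemma~\ref{krellem} argument), and part~(ii) is a citation of the Kalu\v{z}nin--Klin formula. For the closure formula in~(iii) both you and the paper simply cite~\cite{PV2021}, so there is nothing to add there.

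The gap is in your derivation of the ``moreover'' clause of~(iii). You propose to use Proposition~\ref{basicProps}(ii) to show that a group $H$ which is $k$-equivalent to $G$ preserves the same product decomposition, on the grounds that ``the cartesian decomposition corresponds to preserved systems of imprimitivity''. But $G$ is \emph{primitive} here, so there are no nontrivial block systems at all; the coordinate partitions $\Delta_\gamma$ are permuted by $G$, they are not $G$-invariant, and Proposition~\ref{basicProps}(ii) says nothing about them. Likewise, your appeal to part~(i) for the $k$-equivalence of $G^{\Delta_\gamma}$ and $H^{\Delta_\gamma}$ does not apply as stated: $G^{\Delta_\gamma}$ is the action induced on the parts of the partition $\Delta_\gamma$ by its setwise stabiliser, not the restriction of $G$ to an orbit on $\Omega$.

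The paper avoids both problems by working directly from the closure formula. Since $H$ is $k$-equivalent to $G$ and $G\le G^{\Delta_\gamma}\uparrow G^\Gamma$, one has
\[
H\le G^{(k)}\le (G^{\Delta_\gamma}\uparrow G^\Gamma)^{(k)}=(G^{\Delta_\gamma})^{(k)}\uparrow (G^\Gamma)^{[r]},
\]
which immediately gives that $H$ is nonbasic and yields $H^{\Delta_\gamma}\le (G^{\Delta_\gamma})^{(k)}$ and $H^\Gamma\le (G^\Gamma)^{[r]}$ by reading off components. Taking closures and then exchanging the roles of $G$ and $H$ gives the required equalities. This symmetrisation through the closure formula is the missing step in your outline.
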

\begin{proof}
	(i) Since \( \krel(G^\Delta) = \krel(G) \cap \Delta^k \), groups \( G^\Delta \) and \( H^\Delta \) are \( k \)-equivalent and the second part of the claim follows.
	Now take \( H = G^{(k)} \) and observe that \( (G^{(k)})^\Delta \) is \( k \)-equivalent to \( G^\Delta \), hence \( (G^{(k)})^\Delta \leq (G^\Delta)^{(k)} \).

	(ii) Follows from~\cite[Theorem~2.4 and Lemma~2.5]{KK}.

	(iii) The closure formula is the main result of~\cite{PV2021}. It follows from the closure formula that \( H \leq (G^{\Delta_\gamma})^{(k)} \uparrow (G^\Gamma)^{[r]} \)
	so \( H \) is nonbasic. It also follows that \( H^{\Delta_\gamma} \leq (G^{\Delta_\gamma})^{(k)} \) and \( H^\Gamma \leq (G^\Gamma)^{[r]} \).
	Therefore \( (H^{\Delta_\gamma})^{(k)} \leq (G^{\Delta_\gamma})^{(k)} \) and \( (H^\Gamma)^{[r]} \leq (G^\Gamma)^{[r]} \).
	By exchanging the roles of \( H \) and \( G \) we derive \( (G^{\Delta_\gamma})^{(k)} \leq (H^{\Delta_\gamma})^{(k)} \)
	and \( (G^\Gamma)^{[r]} \leq (H^\Gamma)^{[r]} \). Hence \( (G^{\Delta_\gamma})^{(k)} = (H^{\Delta_\gamma})^{(k)} \)
	which implies that \( G^{\Delta_\gamma} \) is \( k \)-equivalent to \( H^{\Delta_\gamma} \), and \( (G^\Gamma)^{[r]} = (H^\Gamma)^{[r]} \) as claimed.
\end{proof}

Note that in Proposition~\ref{basicEmb}~(iii) when \( k \geq 3 \) we have \( |\mathrm{Orb}_k(G^{\Delta_\gamma})| \geq k+1 \) since \( \Delta \geq 2 \), so Proposition~\ref{partClkCl}
implies that \( (G^{\Delta_\gamma} \uparrow G^\Gamma)^{(k)} \leq (G^{\Delta_\gamma})^{(k)} \uparrow (G^\Gamma)^{(k)} \), cf.~\cite[Theorem~1.2]{PV2021}.

Recall that a permutation group \( G \) on \( \Omega \) is called \emph{affine} if it contains a normal elementary abelian subgroup \( V \)
acting regularly on \( \Omega \). In that case \( \Omega \) can be identified with \( V \) and \( G \) decomposes into a semidirect product \( G = V \rtimes G_0 \),
where \( G_0 \) is the zero stabilizer. The group \( G_0 \) acting on \( V \) can be viewed as a subgroup of \( \GL(V) \),
and \( G \) is primitive if and only if \( G_0 \) acts irreducibly on \( V \). In this case, \( V = \soc(G) \).

\begin{lemma}\label{affcl}
	Let \( V \) be a vector space over a prime field, and let \( G = V \rtimes G_0 \), \( G_0 \leq \GL(V) \),
	be an affine group with socle \( V \). Suppose that \( G^{(k)} \), \( k \geq 2 \), is also an affine group with socle \( V \),
	i.e.\ \( G^{(k)} = V \rtimes H_0 \) for some \( H_0 \leq \GL(V) \).
	Then \( H_0 = G_0^{(k-1)} \cap \GL(V) \).
\end{lemma}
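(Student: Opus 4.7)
The plan is to prove the two inclusions $H_0 \leq G_0^{(k-1)} \cap \GL(V)$ and $G_0^{(k-1)} \cap \GL(V) \leq H_0$ separately. One direction is essentially free: since $0 \in V = \Omega$ is the stabilized point of $G_0$ in $G$ and of $H_0$ in $G^{(k)}$, and since $G$ and $G^{(k)}$ are $k$-equivalent, Proposition~\ref{basicProps}(iii) applied to the point $0$ gives that $G_0$ and $H_0$ are $(k-1)$-equivalent, so $H_0 \leq G_0^{(k-1)}$. Combined with the hypothesis $H_0 \leq \GL(V)$, this yields the first inclusion.

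For the reverse inclusion, I would form the affine group $L = V \rtimes (G_0^{(k-1)} \cap \GL(V))$ and show that $L$ is $k$-equivalent to $G$; then $L \leq G^{(k)}$, and comparing zero stabilizers gives $G_0^{(k-1)} \cap \GL(V) \leq H_0$. Since $G \leq L$, one direction of the equivalence is automatic, and by Lemma~\ref{krellem} it remains to fix $g \in L$ and a $k$-tuple $(v_1,\dots,v_k) \in V^k$ and produce some $g' \in G$ agreeing with $g$ on this tuple.

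The key step is a translation trick. Writing $g$ and the sought $g'$ as affine maps $w \mapsto wh + w_0$ and $w \mapsto wh' + w_0'$ with $h \in G_0^{(k-1)} \cap \GL(V)$ and $h' \in G_0$, the matching condition on $(v_1,\dots,v_k)$ becomes the requirement that the vector $v_i h - v_i h'$ be independent of $i$; equivalently, after shifting $v_k$ to the origin, that $h$ and $h'$ agree on the $(k-1)$-tuple $(v_1 - v_k,\dots,v_{k-1} - v_k) \in V^{k-1}$. The existence of such an $h' \in G_0$ is exactly what Lemma~\ref{krellem} provides, applied to the $(k-1)$-equivalence of $G_0$ and $G_0^{(k-1)}$ and to $h \in G_0^{(k-1)}$; the translation $w_0'$ is then forced by any single coordinate. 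The only subtle point is this reduction, which turns a question about the $k$-closure of the affine group into a question about the $(k-1)$-closure of its linear stabilizer; beyond bookkeeping with the semidirect product convention, I do not foresee any real obstacle.
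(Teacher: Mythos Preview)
Your proposal is correct and follows essentially the same argument as the paper. The paper does not form the auxiliary group $L$ explicitly but instead shows directly that each $h \in G_0^{(k-1)} \cap \GL(V)$ lies in $G^{(k)}$ by the same translation trick (subtracting $v_1$ rather than $v_k$, then invoking Lemma~\ref{krellem} for the $(k-1)$-equivalence to find $g \in G_0$, and finally composing with the translation by $v_1^h - v_1^g$); the two presentations are interchangeable.
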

\begin{proof}
	By Proposition~\ref{basicProps}~(iii), \( G_0 \) is \( (k-1) \)-equivalent to \( H_0 \), hence \( H_0 \leq G_0^{(k-1)} \cap \GL(V) \).
	To prove the converse inclusion, let \( h \in G_0^{(k-1)} \cap \GL(V) \) be arbitrary. Let \( v_1, \dots, v_k \in V \)
	be arbitrary vectors. Since \( h \in G_0^{(k-1)} \), by Lemma~\ref{krellem} there exists \( g \in G_0 \) such that \( (v_i - v_1)^g = (v_i - v_1)^h \)
	for \( i = 2, \dots, k \). As elements \( h \) and \( g \) lie in \( \GL(V) \), we have \( v_i^h = v_i^g + v_1^h - v_1^g \)
	for all \( i = 2, \dots, k \). There exists some element \( t \in V \leq G \) acting on \( V \) by translation by
	the vector \( v_1^h - v_1^g \), that is, \( v^t = v + v_1^h - v_1^g \) for all \( v \in V \). Therefore
	\( v_i^h = v_i^{gt} \) for all \( i = 2, \dots, k \). Notice that this equality holds for \( i = 1 \) as well,
	indeed, \( v_1^{gt} = v_1^g + v_1^h - v_1^g = v_1^h \). Finally, \( gt \) lies in \( G \), so \( h \) lies in \( G^{(k)} \) by Lemma~\ref{krellem},
	as the choice of \( v_1, \dots, v_k \in V \) was arbitrary.
	Hence \( G_0^{(k-1)} \cap \GL(V) \leq G^{(k)} \) and since \( G_0^{(k-1)} \cap \GL(V) \) stabilizes the zero vector,
	we derive \( G_0^{(k-1)} \cap \GL(V) \leq H_0 \). 
\end{proof}

The \( k \)-closure of a primitive affine group is also an affine group with the same socle for \( k \geq 4 \) according to the following result.

\begin{lemma}{\em\cite[Lemma~4.1]{PS1992}}\label{4claff}
	Let \( G \) be a primitive affine permutation group with socle \( V \), and assume \( k \geq 4 \).
	Then \( G^{(k)} \) is also an affine group with socle~\( V \).
\end{lemma}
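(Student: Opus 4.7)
The plan is to exhibit a $G$-invariant $4$-ary relation on $\Omega = V$ whose automorphism group in $\sym(V)$ is exactly the full affine group $\mathrm{AGL}(V) = V \rtimes \GL(V)$. The natural candidate is the \emph{parallelogram relation}
\[ R = \{ (a, b, c, d) \in V^4 : a - b = c - d \}. \]
Any affine transformation $v \mapsto Mv + t$ preserves $R$, since it sends the difference $a - b$ to $M(a - b)$. In particular $R$ is $G$-invariant, hence by the very definition of the $k$-closure, $R$ is also preserved by $G^{(k)}$ whenever $k \geq 4$.

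The key combinatorial step is to show that the full automorphism group of $R$ in $\sym(V)$ equals $\mathrm{AGL}(V)$. Given any $\phi \in \sym(V)$ preserving $R$, compose $\phi$ with translation by $-\phi(0)$, which is itself affine and so also preserves $R$; this reduces to the case $\phi(0) = 0$. For all $a, b \in V$ the tuple $(a+b, a, b, 0)$ lies in $R$, so its image $(\phi(a+b), \phi(a), \phi(b), 0)$ also lies in $R$, which yields $\phi(a+b) = \phi(a) + \phi(b)$. Thus $\phi$ is additive and therefore $\mathbb{F}_p$-linear, since $V$ is a vector space over the prime field $\mathbb{F}_p$. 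Hence $\phi \in \GL(V)$, and the original map lies in $\mathrm{AGL}(V)$.

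Combining these observations yields $G^{(k)} \leq \mathrm{AGL}(V)$, so in particular $V \trianglelefteq G^{(k)}$. By Proposition~\ref{basicProps}~(ii), primitivity of $G$ passes to $G^{(k)}$ for $k \geq 2$, so writing $G^{(k)} = V \rtimes H_0$ with $H_0 \leq \GL(V)$, the group $H_0$ must act irreducibly on $V$. Consequently $V$ is a minimal abelian normal subgroup of $G^{(k)}$ and coincides with $\soc(G^{(k)})$, completing the proof. The only nontrivial idea is the choice of the parallelogram relation; once that is made, the rest is a short piece of linear algebra together with the general facts about $k$-closures established earlier.
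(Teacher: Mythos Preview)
Your argument is correct. The paper does not give its own proof of this lemma; it simply cites \cite[Lemma~4.1]{PS1992}, so there is no in-paper proof to compare against. Your approach via the parallelogram relation $R=\{(a,b,c,d):a-b=c-d\}$ is in fact the standard one and is essentially what underlies the cited result: the relation is $4$-ary and $\mathrm{AGL}(V)$-invariant, and its full automorphism group is exactly $\mathrm{AGL}(V)$, forcing $G^{(k)}\le\mathrm{AGL}(V)$ for $k\ge 4$. The concluding steps---$V\le G\le G^{(k)}$, normality of $V$ in $\mathrm{AGL}(V)$, primitivity of $G^{(k)}$ via Proposition~\ref{basicProps}(ii), and the fact that an abelian minimal normal subgroup of a primitive group is its socle---are all sound.
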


The following proposition implies that the groups of Lie type of bounded rank are \( \alt(d) \)-free for \( d \) bounded in terms of the rank.
A less precise inequality but with a more elementary proof can also be found in~\cite[Theorem~5.7A]{DM}.

\begin{proposition}\label{altGL}
	Suppose that \( \alt(d) \) is a section of \( \GL_a(q) \) for some \( a \geq 1 \), \( q \geq 2 \).
	Then \( a \geq d-2 \) for \( d \geq 9 \).
\end{proposition}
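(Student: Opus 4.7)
The plan is to take a minimal counterexample and apply structural reductions to reach a situation where classical bounds on minimum faithful modular representation degrees of $\alt(d)$ and its Schur cover apply. Assume $d \geq 9$ and, for contradiction, that $a$ is chosen minimally with $a < d-2$ such that some $H \leq \GL_a(q)$ involves $\alt(d)$ as a composition factor (equivalently, as a section, since $\alt(d)$ is simple for $d \geq 5$).

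First I would reduce to the case where $H$ acts irreducibly on $V = \F_q^a$: given an $H$-composition series of $V$, the kernel of the action of $H$ on the associated graded module is unipotent, hence a $p$-group, and so cannot involve the non-abelian simple group $\alt(d)$; therefore $\alt(d)$ is a composition factor of some $\GL(V_i/V_{i-1})$, and minimality of $a$ forces $V$ itself to be irreducible. Next I would reduce to $H$ primitive. If $V = W_1 \oplus \cdots \oplus W_t$, $t \geq 2$, is an imprimitivity system permuted transitively by $H$ with kernel $K$, then $\alt(d)$ is a composition factor either of $H/K \hookrightarrow \sym(t)$, forcing $t \geq d$ and hence $a \geq t \geq d > d-2$; or of $K \leq \prod_i \GL(W_i)$, hence of some $\GL(W_i)$ with $\dim W_i < a$. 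Both options contradict the choice of $a$, so $H$ is primitive and irreducible.

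The structure theorem for primitive linear groups now applies: any non-abelian composition factor of $H$ arises either from a quasisimple component of the layer $E(H)$ or from the induced permutation action of $H$ on the set of components. The latter option would require at least $d$ components, forcing $a \geq 2d > d-2$; so some quasisimple direct factor $L$ of $E(H)$ satisfies $L/Z(L) \cong \alt(d)$. For $d \geq 9$ the Schur multiplier of $\alt(d)$ is $\mathbb{Z}/2\mathbb{Z}$, so $L \in \{\alt(d),\, 2.\alt(d)\}$, and $L$ admits a faithful representation of degree at most $a$ over $\overline{\F_q}$. The result follows from the classical lower bounds on minimum faithful modular representation degrees of these groups: at least $d-2$ for $\alt(d)$ (attained on the fully deleted permutation module in characteristic dividing $d$), and at least $2^{\lfloor(d-2)/2\rfloor}$ for the spin cover $2.\alt(d)$, which exceeds $d-2$ once $d \geq 9$.

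The main obstacle is the final representation-theoretic input, which requires careful citation of the precise minimum-degree bounds for modular representations of $\alt(d)$ and of its spin cover $2.\alt(d)$; the hypothesis $d \geq 9$ is dictated exactly by the point at which $2^{\lfloor(d-2)/2\rfloor} \geq d-2$ begins to hold, together with the need to avoid the exceptional Schur multipliers at $d = 6, 7$.
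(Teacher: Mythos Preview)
Your overall strategy matches the paper's: reduce from ``$\alt(d)$ is a section of $\GL_a(q)$'' to the existence of a faithful projective modular representation of $\alt(d)$ in degree~$a$, then invoke known lower bounds. The paper compresses your entire reduction into a single citation of \cite[Lemma~5.7D]{DM}, which directly produces $G\le\GL_a(\mathbb{K})$ over an algebraically closed field with $G/Z(G)\cong\alt(d)$ and $Z(G)$ scalar, and then quotes \cite[Proposition~5.3.7(i)]{KL} for $a\ge d-2$; this single projective bound handles your two cases $L\in\{\alt(d),\,2.\alt(d)\}$ at once.

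Your hand-rolled reduction, however, has a real omission at the primitive step. You claim that every nonabelian composition factor of a primitive irreducible $H\le\GL_a(q)$ comes either from a component of $E(H)$ or from permuting the components. This is not true in general: primitivity only forces every characteristic abelian subgroup of $F(H)$ to be cyclic, so $F(H)$ may still contain a non-central symplectic-type $r$-subgroup $R$ with $|R/Z(R)|=r^{2m}$ and $a\ge r^m$, and then nonabelian composition factors of $H$ can arise from $\mathrm{Out}(R)$, which involves $\mathrm{Sp}_{2m}(r)$ (or $O^\pm_{2m}(2)$ when $r=2$). Your dichotomy simply does not see this source. The repair is short---note $2m\le r^m\le a$, apply minimality of $a$ to $\alt(d)$ as a section of $\GL_{2m}(r)$, and dispose of the equality cases $(r,m)\in\{(2,1),(2,2)\}$ by inspection---but as stated the argument is incomplete. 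A smaller point: your imprimitive step silently uses that $\alt(d)$ involved in $\sym(t)$ forces $t\ge d$; this is true but itself warrants a citation or a line of justification.
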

\begin{proof}
	Assume that \( a \) is the minimal integer such that \( \GL_a(\F) \) contains a section isomorphic to \( \alt(d) \), \( d \geq 9 \),
	for some field \( \F \) of positive characteristic.
	By~\cite[Lemma~5.7D]{DM}, for some algebraically closed field \( \mathbb{K} \) of characteristic \( p > 0 \) there exists a subgroup \( G \) of \( \GL_a(\mathbb{K}) \)
	such that \( G/Z(G) \simeq \alt(d) \), and \( Z(G) \) is a group of scalars. In particular, \( \alt(d) \) has a faithful projective \( p \)-modular
	representation of degree~\( a \). By~\cite[Proposition~5.3.7~(i)]{KL} we have \( a \geq d-2 \).
\end{proof}

When dealing with \( k \)-closures of affine permutation groups, we will use Aschbacher's classification of subgroups of linear groups~\cite{A1984}.
Xu, Giudici, Li and Praeger~\cite{XuGLP2011} proved that Aschbacher's classes in \( \GaL_a(q) \) are preserved by \( k \)-closures
for suitably chosen \( k \), depending on the class; here we view a subgroup of \( \GaL_a(q) \) as a permutation group on \( \F_q^a \).

We will follow~\cite{XuGLP2011} in our definitions and notations of Aschbacher classes. Recall that a subgroup of \( \GaL_a(q) \) not containing
\( \mathrm{SL}_a(q) \) lies in one of the nine classes \( \C_1, \dots, \C_9 \), defined mostly in terms of some geometric structure preserved by the groups lying in the class.
We briefly outline the classes and relevant structures:
\smallskip

\( \C_1 \): Groups preserving a nontrivial proper subspace.

\( \C_2 \): Groups acting imprimitively on the vector space.

\( \C_3 \): Groups preserving the structure of an extension field.

\( \C_4 \): Groups preserving a nontrivial decomposition of the vector space into the tensor product of two spaces of unequal dimensions.

\( \C_5 \): Groups preserving the structure of a proper subfield.

\( \C_6 \): Groups normalizing a subgroup of symplectic type.

\( \C_7 \): Groups preserving a nontrivial decomposition of the vector space into the tensor product of several spaces of equal dimensions.

\( \C_8 \): Groups preserving a nondegenerate alternating, hermitian or quadratic form. We will use notation \( \C_{\mathrm{Sp}} \), \( \C_{\mathrm U} \) and \( \C_{\mathrm O} \),
respectively, depending on the form preserved.

\( \C_9 \): Groups which are not contained in any of \( \C_1, \dots, \C_8 \). These groups are almost simple modulo center.
\smallskip

Strictly speaking, each class \( \C_i \), \( i = 1, \dots, 8 \), is defined by its maximal members.
For example, \( G \leq \GaL_a(q) \) lies in \( \C_4 \) if \( V = \F_q^a \) can be decomposed as \( V = U \otimes W \),
\( \dim U > \dim W > 1 \), and \( G \) can be conjugated inside the central product \( (\GL(U) \otimes \GL(W)) \rtimes \aut(\F_q) \) acting naturally on \( V \).
By~\cite[Proposition~4.4.1]{XuGLP2011}, this is equivalent to saying that \( G \) preserves the set of simple tensors, i.e.\ a unary relation
of the form \( \{ u \otimes w \mid u \in U,\, w \in W \} \). This implies that \( G^{(1)} \cap \GaL_a(q) \) also lies in the class \( \C_4 \).
Essentially~\cite[Theorem~1.1]{XuGLP2011} says that one can find similar relations for all of Aschbacher classes.
Here we give a summary of the main results of~\cite{XuGLP2011}.

\begin{proposition}\label{xukcl}
	Let \( G \leq \GaL_a(q) \), \( a \geq 2 \), act on \( \F_q^a \) naturally.
	\begin{enumerate}[{\rm(i)}]
		\item If \( G \in \C_i \) for \( i \in \{1, \dots, 7, \mathrm{Sp}, \mathrm{U}, \mathrm{O} \} \), then \( G^{(k)} \cap \GaL_a(q) \in \C_i \)
			for \( k = 2 \) if \( i \in \{ 3, 6, \mathrm{Sp} \} \) and \( k = 1 \) otherwise.
		\item If \( G \in \C_9 \), then \( G^{(2)} \cap \GaL_a(q) \in \C_9 \), unless \( a = 4 \), \( q = 2 \) and \( G = \alt(7) \).
	\end{enumerate}
\end{proposition}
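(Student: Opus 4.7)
The plan is to follow the strategy of~\cite{XuGLP2011}: for each Aschbacher class $\C_i$, exhibit a $G$-invariant relation on $\F_q^a$ of arity at most the claimed $k$ whose pointwise stabilizer in $\GaL_a(q)$ lies in $\C_i$. Once such a relation is in hand, the containment $G^{(k)}\cap\GaL_a(q)\in\C_i$ is immediate: by Lemma~\ref{krellem} the closure $G^{(k)}$ preserves all $G$-invariant $k$-ary relations, so it stabilizes the witnessing relation and must itself satisfy the characterization of $\C_i$. Thus the content of the proposition is entirely the class-by-class production of these witnesses.

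For the classes yielding $k=1$, namely $\C_1$, $\C_2$, $\C_4$, $\C_5$, $\C_7$, $\C_{\mathrm U}$, $\C_{\mathrm O}$, I would exhibit a unary witness. Natural candidates are: the preserved proper subspace in $\C_1$; the union of summands of the preserved direct-sum decomposition in $\C_2$; the set of simple tensors in $\C_4$ and $\C_7$, already pointed out in the paragraph preceding the proposition via~\cite[Proposition~4.4.1]{XuGLP2011}; the $\F_{q_0}$-span of a fixed basis realizing the subfield structure in $\C_5$; and the zero set of the defining hermitian or quadratic form in $\C_{\mathrm U}$ and $\C_{\mathrm O}$. For the three classes requiring $k=2$, I would use a binary witness: the graph of scalar multiplication by $\F_{q^s}$ in $\C_3$, the orthogonality relation of the invariant alternating form in $\C_{\mathrm{Sp}}$, and the union of graphs of elements of the normalized symplectic-type subgroup in $\C_6$. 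In each case the nontrivial direction is the converse, that stabilization of the indicated relation forces $G\in\C_i$, which is exactly the analysis carried out class by class in~\cite{XuGLP2011}.

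Part~(ii) is the main obstacle, because $\C_9$ is defined negatively and admits no natural invariant relation. The approach is by contradiction: if $G\in\C_9$ and $G^{(2)}\cap\GaL_a(q)$ were to land in some $\C_i$ with $i\le 8$, then since each such $\C_i$ is witnessed by a relation of arity at most $2$, the group $G\le G^{(2)}$ would preserve the same relation and hence lie in $\C_i$ as well, contradicting $G\in\C_9$. The delicate step is therefore to verify that $G^{(2)}$ cannot leave $\C_9$ altogether by absorbing $\mathrm{SL}_a(q)$, and to pin down all exceptions; this relies on the classification of $2$-transitive almost simple subgroups of $\GL_a(q)$. The genuine exception $a=4$, $q=2$, $G=\alt(7)$ reflects the embedding $\alt(7)<\alt(8)=\GL_4(2)$ together with the $2$-transitivity of $\alt(7)$ on the $15$ nonzero vectors of $\F_2^4$, which forces $G^{(2)}\cap\GaL_4(2)=\GL_4(2)$ and thereby pushes the closure out of $\C_9$. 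Checking that this is the only such accident is the technical heart of part~(ii), and beyond reproducing the detailed maximal-subgroup analysis of~\cite{XuGLP2011} I would simply cite the result.
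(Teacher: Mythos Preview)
Your proposal is essentially correct and aligns with the paper's approach: the paper's own proof is nothing more than a direct citation to \cite[Theorem~1.1]{XuGLP2011} for part~(i) and \cite[Proposition~3.3.1]{XuGLP2011} for part~(ii), and you have accurately reconstructed the strategy behind those results---exhibiting a low-arity invariant relation characterizing each geometric class, and for $\C_9$ reducing to the question of whether $G^{(2)}$ can swallow $\mathrm{SL}_a(q)$.

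Two minor refinements to your witness list: for $\C_5$ the correct unary invariant is the set $\F_q V_0$ of $\F_q$-scalar multiples of the $\F_{q_0}$-span, not $V_0$ itself (scalars in $\F_q^\times\setminus\F_{q_0}^\times$ lie in the $\C_5$-stabilizer but do not fix $V_0$); and in your argument for~(ii), the step ``$H\in\C_i$ for $i\le 8$ forces $G\in\C_i$'' needs no witness relation at all, since the geometric classes are closed under passing to subgroups by definition---the witness relations are used only in the forward direction of part~(i). Neither point affects the correctness of your outline.
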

\begin{proof}
	Part~(i) follows from~\cite[Theorem~1.1]{XuGLP2011}, and part~(ii) from~\cite[Proposition~3.3.1]{XuGLP2011}.
\end{proof}

Note that Aschbacher's theorem~\cite{A1984} applies not only to \( \GaL_a(q) \) but to the automorphism groups of the classical groups in general.
Classes \( \C_1, \dots, \C_9 \) are defined similarly for the classical groups, and if a group \( G \) belongs to some \( \C_i \), \( i = 1, \dots, 8 \),
defined for a classical group, then \( G \) lies in the same class but defined for \( \GaL_a(q) \), if one forgets about the classical geometry preserved by \( G \).

\section{Closures of tensor products}\label{secTensor}

In this section \( \X \) is an arbitrary complete class which contains all solvable groups.
In our descriptions of subgroups stabilizing tensor decompositions we follow~\cite[Sections~4.4 and~4.5]{XuGLP2011}.

\subsection{\texorpdfstring{Stabilizers of \( X \otimes Y \)}{Stabilizers of 2-fold tensor product}}

Let \( V = X \otimes Y \) be a tensor product of vector spaces over a finite field~\( \F \).
We will also use the symbol \( \otimes \) to denote the central product of two linear groups acting naturally on the corresponding tensor product of vector spaces.
Recall that the central product \( \GL(X) \otimes \GL(Y) \) acts on \( V \) by the following rule, if \( g_X \in \GL(X) \),
\( g_Y \in \GL(Y) \) and \( x \in X \), \( y \in Y \), then \( (x \otimes y)^{g_X \otimes g_Y} = x^{g_X} \otimes y^{g_Y} \).

The group of field automorphisms \( \aut(\F) \) acts on \( V \) as follows.
First, choose an \( \F \)-basis \( x_1, \dots, x_r \) of \( X \). Every \( x \in X \) can be decomposed
uniquely into a sum \( x = \alpha_1 x_1 + \dots + \alpha_r x_r \), and one can define the action of \( \sigma \in \aut(\F) \) on \( X \)
by \( x^\sigma = \alpha_1^\sigma x_1 + \dots + \alpha_r^\sigma x_r \). By choosing some basis of \( Y \) one defines the
action of \( \aut(\F) \) in the similar way. Finally, the action of \( \sigma \in \aut(\F) \) on \( V = X \otimes Y \) is defined
by \( (x \otimes y)^\sigma = x^\sigma \otimes y^\sigma \), where \( x \in X \), \( y \in Y \).

It follows that the group \( (\GL(X) \otimes \GL(Y)) \rtimes \aut(\F) \) preserves the tensor decomposition \( V = X \otimes Y \),
and when \( \dim X \neq \dim Y \) it is the largest group preserving that decomposition, see~\cite[Formula~(4.4.1)]{XuGLP2011}.
Every element \( g \) of that group can be written in the form
\( g = (g_X \otimes g_Y)\sigma \), where \( g_X \in \GL(X) \), \( g_Y \in \GL(Y) \) and \( \sigma \in \aut(\F) \).
If \( g \) admits a second decomposition \( g = (g_X' \otimes g_Y')\sigma' \), then \( \sigma' = \sigma \) and there
exists some nonzero \( \alpha \in \F \) such that \( g_X' = \alpha \cdot g_X \) and \( g_Y' = \frac{1}{\alpha} \cdot g_Y \).

Take \( G \leq (\GL(X) \otimes \GL(Y)) \rtimes \aut(\F) \) where \( X \) and \( Y \) are vector spaces over~\( \F \).
Recall that we can identify \( \F^\times \) with the center of \( \GL(X) \) and \( \GL(Y) \).
For \( g = (g_X \otimes g_Y)\sigma \in G \) define the cosets \( \pi_X(g) = \F^\times \cdot g_X\sigma \) and \( \pi_Y(g) = \F^\times \cdot g_Y\sigma \).
Note that these elements are defined correctly. Indeed, if \( g = (g_X' \otimes g_Y')\sigma' \), then \( \sigma' = \sigma \)
and \( g_X' = \alpha \cdot g_X \) for some \( \alpha \in \F^\times \). Hence \( \F^\times \cdot g_X'\sigma' = \F^\times \cdot \alpha g_X\sigma
= \F^\times \cdot g_X\sigma \), so \( \pi_X(g) \) does not depend on the choice of the decomposition of \( g \). Similar reasoning holds
for \( \pi_Y \), so \( \pi_X : G \to \PGaL(X) \) and \( \pi_Y : G \to \PGaL(Y) \) are correctly defined maps,
and moreover, \( \pi_X \) and \( \pi_Y \) are homomorphisms.

Assume that \( \F^\times = Z(\GL(V)) \leq G \), so \( \ker \pi_X \cap \ker \pi_Y = \F^\times \).
Since \( \pi_X(G) \leq \PGaL(X) \), we can consider the full preimage \( G_X \) of \( \pi_X(G) \) in \( \GL(X) \);
define \( G_Y \) accordingly. By construction, \( G_X \) and \( G_Y \) contain \( \F^\times \).
Now, by Remak's theorem, \( G/\F^\times \) embeds into a direct product
\( \pi_X(G) \times \pi_Y(G) \leq \PGaL(X) \times \PGaL(Y) \), so \( G/\F^\times \) embeds into \( G_X/\F^\times \times G_Y/\F^\times \).

\begin{proposition}\label{twotens}
	Let \( X \) and \( Y \) be vector spaces over a finite field \( \F \), and let \( V = X \otimes Y \).
	Let \( G \) and \( H \) be subgroups of \( (\GL(X) \otimes \GL(Y)) \rtimes \aut(\F) \leq \GaL(V) \).
	Assume that \( \F^\times \leq G \) and \( \F^\times \leq H \), and \( G \) and \( H \) are \( k \)-equivalent
	for some \( k \geq 1 \). Then \( G_X \) is \( k \)-equivalent to \( H_X \), and \( G_Y \) is \( k \)-equivalent to \( H_Y \).
\end{proposition}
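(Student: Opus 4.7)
The plan is to verify the criterion of Lemma~\ref{krellem} for the action of $G_X$ on $X$, and then to read off the statement for $G_Y$ by the symmetric argument (interchanging the roles of $X$ and $Y$). By the symmetry of $k$-equivalence, it suffices to show one direction: for every $g' \in G_X$ and every tuple $(x_1, \ldots, x_k) \in X^k$, there exists $h' \in H_X$ with $x_i^{g'} = x_i^{h'}$ for all $i$; the reverse implication follows by interchanging the roles of $G$ and $H$.

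The first reduction is cosmetic. Since $\F^\times$ lies in the kernel of $\GaL(X) \to \PGaL(X)$, both $G_X$ and $H_X$ contain $\F^\times$, and every element of $G_X$ has the form $\mu\cdot g_X\sigma$ for some $(g_X \otimes g_Y)\sigma \in G$ and some $\mu \in \F^\times$. Therefore, it suffices to treat the case $g' = g_X\sigma$: the element $h'$ for $\mu g_X\sigma$ is then obtained by multiplying the result for $g_X\sigma$ by $\mu \in \F^\times \leq H_X$.

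The main step is to transport the problem from $X$ to $V = X \otimes Y$. Fix any nonzero $y_0 \in Y$ and set $v_i = x_i \otimes y_0$ for $i = 1, \ldots, k$. Since $G$ and $H$ are $k$-equivalent, Lemma~\ref{krellem} supplies an element $h = (h_X \otimes h_Y)\tau \in H$ with $v_i^{(g_X \otimes g_Y)\sigma} = v_i^h$ for every $i$, that is,
\[ x_i^{g_X\sigma} \otimes y_0^{g_Y\sigma} = x_i^{h_X\tau} \otimes y_0^{h_Y\tau}. \]
For every index $i$ with $x_i \ne 0$ both sides are nonzero simple tensors, so uniqueness of simple tensor decomposition up to a scalar yields $\lambda_i \in \F^\times$ with $x_i^{g_X\sigma} = \lambda_i\, x_i^{h_X\tau}$ and $y_0^{g_Y\sigma} = \lambda_i^{-1}\, y_0^{h_Y\tau}$. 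The second equality determines $\lambda_i$ entirely from the fixed pair $(y_0, h_Y\tau)$, so all $\lambda_i$ coincide with a common scalar $\lambda \in \F^\times$; when $x_i = 0$ the identity $x_i^{g_X\sigma} = \lambda\, x_i^{h_X\tau}$ holds trivially. Setting $h'' := \lambda h_X\tau$, we have $\F^\times h'' = \F^\times h_X\tau = \pi_X(h) \in \pi_X(H)$, hence $h'' \in H_X$, and $x_i^{g_X\sigma} = x_i^{h''}$ for every $i$, completing the construction.

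The only subtle point I expect is precisely the collapse of the scalar ambiguity: a priori each equation $x_i^{g_X\sigma} \otimes y_0^{g_Y\sigma} = x_i^{h_X\tau} \otimes y_0^{h_Y\tau}$ yields its own $\lambda_i$, and for the argument to work these must agree. This is the whole reason for fixing a single $y_0$ and reading $\lambda$ off from the $Y$-coordinate, which is common to all~$i$. Once this is in hand, the rest of the argument is bookkeeping with the definitions of $\pi_X$, $G_X$, and~$H_X$.
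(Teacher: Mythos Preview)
Your proof is correct and follows essentially the same route as the paper's: both fix a nonzero vector in $Y$, transport the $k$-tuple to $V$ by tensoring, invoke $k$-equivalence there, use uniqueness of simple-tensor decomposition to extract a scalar from the $Y$-coordinate (common to all $i$), and absorb it via $\F^\times \leq H_X$. Your handling of the zero entries $x_i = 0$ and the reduction to $g' = g_X\sigma$ are slightly more explicit, but the argument is the same.
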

\begin{proof}
	We will show that \( \krel(H_X) \subseteq \krel(G_X) \). By Lemma~\ref{krellem}, it is sufficient to show
	that for all \( x_1, \dots, x_k \in X \) and \( \gamma_X \in G_X \) there exists
	\( \beta_X \in H_X \) such that \( x_i^{\gamma_X} = x_i^{\beta_X} \), \( i = 1, \dots, k \).
	Notice that we may assume that the vector \( x_i \) is nonzero, \( i = 1, \dots, k \).

	Let \( y \in Y \) be an arbitrary nonzero vector. Since \( \gamma_X \) lies in \( G_X \),
	there exists \( g \in G \) such that \( g = (g_X \otimes g_Y)\sigma \), where
	\( g_X \in \GL(X) \), \( g_Y \in \GL(Y) \), \( \sigma \in \aut(\F) \) and \( \gamma_X = g_X\sigma \).
	Let \( \gamma_Y = g_Y\sigma \). As \( G \) is \( k \)-equivalent to \( H \), there exists an element \( h \in H \) such that
	\( (x_i \otimes y)^g = (x_i \otimes y)^h \), \( i = 1, \dots, k \).

	Write \( h = (h_X \otimes h_Y)\sigma' \), where \( h_X \in \GL(X) \), \( h_Y \in \GL(Y) \), \( \sigma' \in \aut(\F) \).
	Let \( \gamma_X' = h_X\sigma' \) and \( \gamma_Y' = h_Y\sigma' \). We have
	\[ x_i^{\gamma_X} \otimes y^{\gamma_Y} = (x_i \otimes y)^g = (x_i \otimes y)^h = x_i^{\gamma_X'} \otimes y^{\gamma_Y'}, \]
	for \( i = 1, \dots, k \). It follows that there exist \( \alpha_i \in \F^\times \) such that
	\( x_i^{\gamma_X} = \alpha_i \cdot x_i^{\gamma_X'} \) and \( y^{\gamma_Y} = \frac{1}{\alpha_i} \cdot y^{\gamma_Y'} \)
	for \( i = 1, \dots, k \). As \( y \) is nonzero, \( \alpha_i \) does not depend on \( i \), i.e.\ \( \alpha_i = \alpha \)
	for \( i = 1, \dots, k \).

	Since \( \F^\times \leq H_X \), there exists \( z \in H_X \) such that
	\( x^z = \alpha \cdot x \) for all \( x \in X \). Therefore \( x_i^{\gamma_X} = x_i^{\gamma_X' z} \)
	for \( i = 1, \dots, k \). Now \( \beta_X = \gamma_X' \cdot z \in H_X \) is the required element.

	By switching the roles of \( G \) and \( H \), we obtain \( \krel(G_X) \subseteq \krel(H_X) \).
	Thus \( \krel(H_X) = \krel(G_X) \) and \( G_X \) is \( k \)-equivalent to \( H_X \), as claimed.
	Similarly, \( \krel(H_Y) = \krel(G_Y) \) and \( G_Y \) is \( k \)-equivalent to \( H_Y \).
\end{proof}

\begin{corollary}\label{clTens}
	Let \( V \) be a vector space over a finite field \( \F \), and let \( G \) be a primitive affine group with socle \( V \).
	Set \( G = V \rtimes G_0 \), where \( G_0 \) is the zero stabilizer, and assume that \( \F^\times \leq G_0 \).
	Suppose that \( G_0 \) stabilizes a nontrivial tensor decomposition \( V = X \otimes Y \)
	over \( \F \), where \( \dim X \neq \dim Y \). Finally, assume that \( k \geq 4 \) and the \( k \)-closures of
	\( X \rtimes (G_0)_X \) and \( Y \rtimes (G_0)_Y \) lie in the class \( \X \). Then \( G^{(k)} \) lies in the class~\( \X \).
\end{corollary}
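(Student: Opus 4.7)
The plan is to identify \( G^{(k)} \) as an affine group whose point stabilizer \( H_0 \) still preserves the tensor decomposition, and then to pull back the \( \X \)-membership from the two linear factors via Proposition~\ref{twotens} together with a Remak-type assembly.

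First I would invoke Lemma~\ref{4claff} to obtain that \( G^{(k)} = V \rtimes H_0 \) is again primitive affine with socle \( V \), and then Lemma~\ref{affcl} to identify \( H_0 = G_0^{(k-1)} \cap \GL(V) \). Since \( G_0 \in \C_4 \) (it stabilizes the tensor decomposition \( V = X \otimes Y \) with \( \dim X \neq \dim Y \)), Proposition~\ref{xukcl}(i) ensures that \( H_0 \) still preserves the set of simple tensors, so that
\[ H_0 \leq (\GL(X) \otimes \GL(Y)) \rtimes \aut(\F). \]
Proposition~\ref{basicProps}(iii) gives that \( G_0 \) and \( H_0 \) are \( (k-1) \)-equivalent, and since both contain \( \F^\times \), Proposition~\ref{twotens} applies (with \( k-1 \) in place of \( k \)) to yield that \( (G_0)_X \) is \( (k-1) \)-equivalent to \( (H_0)_X \), and similarly on the \( Y \) side.

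Next I would verify that \( X \rtimes (G_0)_X \) is primitive affine with socle \( X \). Irreducibility of \( (G_0)_X \) on \( X \) follows from that of \( G_0 \) on \( V \): any \( (G_0)_X \)-invariant proper nonzero subspace \( X_0 \) would give the \( G_0 \)-invariant subspace \( X_0 \otimes Y \), contradicting primitivity of \( G \). Applying Lemmas~\ref{4claff} and~\ref{affcl} to \( X \rtimes (G_0)_X \) gives
\[ (X \rtimes (G_0)_X)^{(k)} = X \rtimes \bigl(((G_0)_X)^{(k-1)} \cap \GL(X)\bigr), \]
which lies in \( \X \) by hypothesis; completeness of \( \X \) then puts the point stabilizer \( ((G_0)_X)^{(k-1)} \cap \GL(X) \) in \( \X \). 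Since \( (H_0)_X \) is \( (k-1) \)-equivalent to \( (G_0)_X \) and sits inside \( \GL(X) \), it is contained in that stabilizer, so \( (H_0)_X \in \X \); symmetrically \( (H_0)_Y \in \X \).

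Finally I would reassemble \( H_0 \). The homomorphisms \( \pi_X \) and \( \pi_Y \) from Section~\ref{secTensor} satisfy \( \ker \pi_X \cap \ker \pi_Y = \F^\times \), so Remak's theorem embeds \( H_0/\F^\times \) into \( (H_0)_X/\F^\times \times (H_0)_Y/\F^\times \). Both factors are quotients of \( \X \)-groups, so completeness of \( \X \) (closure under subgroups, quotients, and extensions) gives \( H_0/\F^\times \in \X \); then \( H_0 \in \X \) as an extension of \( \F^\times \) (cyclic, hence solvable, hence in \( \X \)) by \( H_0/\F^\times \); and \( G^{(k)} = V \rtimes H_0 \in \X \) since \( V \) is elementary abelian. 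The step I expect to require the most care is the identification of \( H_0 \) as a subgroup of \( (\GL(X) \otimes \GL(Y)) \rtimes \aut(\F) \): knowing that \( H_0 \) preserves the simple-tensor set is not by itself enough, and one must combine this preservation with Proposition~\ref{xukcl}(i) and the inclusion \( H_0 \leq \GL(V) \) to conclude that \( H_0 \) in fact acts semilinearly with respect to the tensor structure.
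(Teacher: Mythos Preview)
Your proposal is correct and follows essentially the same route as the paper's proof. The closing worry you flag is handled in the paper by citing \cite[Lemma~4.4.5~(1)]{XuGLP2011}: the set of simple tensors for \( V = X \otimes Y \) is a \( 1 \)-ary \( G_0 \)-invariant relation, hence \( H_0 \)-invariant, and when \( \dim X \neq \dim Y \) its stabilizer in \( \GaL(V) \) is exactly \( (\GL(X) \otimes \GL(Y)) \rtimes \aut(\F) \), so \( H_0 \) lands there automatically.
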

\begin{proof}
	By Lemma~\ref{4claff}, the \( k \)-closures of \( G \), \( X \rtimes (G_0)_X \) and \( Y \rtimes (G_0)_Y \)
	are affine groups with socles \( V \), \( X \) and \( Y \), respectively.
	Set \( H = G^{(k)} \) and \( H = V \rtimes H_0 \), so \( G_0 \) is \( (k-1) \)-equivalent to \( H_0 \) by Proposition~\ref{basicProps}~(iii).
	By Proposition~\ref{xukcl}~(i), \( H_0 \) preserves some tensor decomposition of \( V \),
	and by~\cite[Lemma~4.4.5~(1)]{XuGLP2011} it preserves the decomposition \( V = X \otimes Y \).
	By Proposition~\ref{twotens}, \( (G_0)_X \) is \( (k-1) \)-equivalent to \( (H_0)_X \),
	and \( (G_0)_Y \) is \( (k-1) \)-equivalent to \( (H_0)_Y \). Therefore \( (H_0)_X \leq (G_0)_X^{(k-1)} \)
	and \( (H_0)_Y \leq (G_0)_Y^{(k-1)} \), hence
	\[ X \rtimes (H_0)_X \leq X \rtimes ((G_0)_X^{(k-1)} \cap \GL(X)) = (X \rtimes (G_0)_X)^{(k)}, \]
	where the last equality follows from Lemma~\ref{affcl}, and \( \GL(X) \) is the general linear group
	over the prime subfield of \( \F \). Similarly, \( Y \rtimes (H_0)_Y \leq (Y \rtimes (G_0)_Y)^{(k)} \).
	Therefore \( (H_0)_X \) and \( (H_0)_Y \) lie in \( \X \), and thus \( H \) lies in \( \X \) as claimed.
\end{proof}

\subsection{Subfield stabilizers}

Let \( V \) be a vector space over a finite field \( \F_q \) of order~\( q \), and let \( v_1, \dots, v_d \)
be the \( \F_q \)-basis of \( V \). Let \( \F_{q_0} \) be a proper subfield of \( \F_q \), and let \( V_0 \)
be the \( \F_{q_0} \)-span of \( v_1, \dots, v_d \). We say that \( G \leq \GaL(V) \) is a subfield subgroup,
if it preserves the set \( \F_qV_0 = \{ \lambda v \mid \lambda \in \F_q,\, v \in V_0 \} \) of lines passing through \( V_0 \).
The space \( V \) can be identified with a tensor product \( V_0 \otimes \F_q \) over \( \F_{q_0} \), and under such
an identification the group \( G \) preserves this tensor product, see~\cite[Formula (4.5.2)]{XuGLP2011}.
As in the previous section, let \( G_{V_0} \leq \GL(V_0) \) and \( G_{\F_q} \leq \GL_1(\F_q) \) denote the groups induced by \( G \) on \( V_0 \) and \( \F_q \) respectively.

\begin{proposition}\label{clSubf}
	Let \( V \) be a vector space over a finite field \( \F_q \) of order \( q \), and let \( G \)
	be a primitive affine group with socle \( V \). Set \( G = V \rtimes G_0 \), where \( G_0 \leq \GaL(V) \),
	and assume that \( \F_q^\times \leq G_0 \). Suppose that \( G_0 \) is a subfield subgroup, in particular,
	\( G_0 \) preserves the tensor decomposition \( V = V_0 \otimes \F_q \) over the field \( \F_{q_0} \).
	Assume that \( k \geq 4 \) and the \( k \)-closure of \( V_0 \rtimes (G_0)_{V_0} \) lies in the class \( \X \).
	Then \( G^{(k)} \) lies in the class \( \X \).
\end{proposition}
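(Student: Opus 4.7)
My plan is to follow the pattern of Corollary~\ref{clTens}, with $V = V_0 \otimes \F_q$ playing the role of the tensor decomposition and $\F_{q_0}$ the base field. The crucial difference is that only one of the two factors is constrained by a hypothesis: the $\F_q$-factor must be handled by a separate argument exploiting the fact that the induced group is solvable.

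First, Lemma~\ref{4claff} lets me write $G^{(k)} = V \rtimes H_0$, and Proposition~\ref{basicProps}(iii) tells me that $G_0$ is $(k-1)$-equivalent to $H_0$. Since the set of lines $\F_q V_0$ is a $G_0$-invariant unary relation and $k-1 \geq 1$, it is also $H_0$-invariant. Combined with Proposition~\ref{xukcl}(i) applied to the class~$\C_5$ (and the essential uniqueness of the subfield structure encoded by this unary relation in the Aschbacher framework of~\cite{XuGLP2011}), this forces $H_0$ to preserve the same tensor decomposition $V = V_0 \otimes \F_q$ over $\F_{q_0}$ as $G_0$. Noting $\F_q^\times \leq G_0 \leq H_0$ and so $\F_{q_0}^\times \leq H_0$, I then invoke Proposition~\ref{twotens} with $\F = \F_{q_0}$ to conclude that $(G_0)_{V_0}$ is $(k-1)$-equivalent to $(H_0)_{V_0}$, and $(G_0)_{\F_q}$ is $(k-1)$-equivalent to $(H_0)_{\F_q}$.

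For the $V_0$-factor, a short representation-theoretic argument (any $(G_0)_{V_0}$-invariant subspace $U_0 \leq V_0$ yields a $G_0$-invariant subspace $U_0 \otimes \F_q$ of $V$) shows that irreducibility of $G_0$ on $V$ forces $(G_0)_{V_0}$ to be irreducible on $V_0$, so $V_0 \rtimes (G_0)_{V_0}$ is primitive and by Lemma~\ref{4claff} its $k$-closure is affine with socle $V_0$. Combined with the hypothesis and Lemma~\ref{affcl}, this yields
\[ V_0 \rtimes (H_0)_{V_0} \leq V_0 \rtimes \bigl((G_0)_{V_0}^{(k-1)} \cap \GL(V_0)\bigr) = (V_0 \rtimes (G_0)_{V_0})^{(k)} \in \X, \]
so $(H_0)_{V_0} \in \X$. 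For the $\F_q$-factor, the $\F_q$-semilinearity of $G_0$ on $V$ confines $(G_0)_{\F_q}$ to the metacyclic group $\GaL_1(q) = \F_q^\times \rtimes \aut(\F_q)$, making $\F_q \rtimes (G_0)_{\F_q}$ a solvable affine group. By~\cite{BPVV2021} its $k$-closure is again solvable for $k \geq 3$ and so lies in $\X$; the same reasoning as above then yields $(H_0)_{\F_q} \in \X$.

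Finally, the embedding $H_0 / \F_{q_0}^\times \hookrightarrow (H_0)_{V_0}/\F_{q_0}^\times \times (H_0)_{\F_q}/\F_{q_0}^\times$ recorded before Proposition~\ref{twotens}, together with the closure of $\X$ under subgroups, direct products, and extensions by the abelian group $\F_{q_0}^\times$, gives $H_0 \in \X$ and hence $G^{(k)} = V \rtimes H_0 \in \X$. I expect the most delicate step to be the $\C_5$-stability claim, namely that $H_0$ preserves the very decomposition $V = V_0 \otimes \F_q$ rather than merely some isomorphic subfield structure; this should follow from the unary relation $\F_q V_0$ uniquely encoding the decomposition, but requires a careful appeal to~\cite{XuGLP2011}.
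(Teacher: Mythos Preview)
Your proof is correct and follows essentially the same route as the paper's: Lemma~\ref{4claff} and Proposition~\ref{xukcl}(i) to get $H_0$ inside the same subfield stabilizer, then Proposition~\ref{twotens} for the $(k-1)$-equivalence of the $V_0$-projections, then Lemma~\ref{affcl} plus the hypothesis to place $(H_0)_{V_0}$ in $\X$, and finally the Remak embedding to conclude. The one economy the paper makes is on the $\F_q$-factor: rather than invoking~\cite{BPVV2021} and passing through the $k$-closure of $\F_q \rtimes (G_0)_{\F_q}$, it simply observes directly that $(H_0)_{\F_q} \leq \GL_1(q)$ is solvable and hence already lies in~$\X$, so no closure argument is needed on that side at all.
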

\begin{proof}
	By Lemma~\ref{4claff}, the \( k \)-closures of \( G \) and \( V_0 \rtimes (G_0)_{V_0} \) are affine groups with
	socles \( V \) and \( V_0 \), respectively. By Proposition~\ref{xukcl}~(i), \( G^{(k)} \) is also a subfield subgroup
	preserving \( V = V_0 \otimes \F_q \) over \( \F_{q_0} \). Set \( H = G^{(k)} \) and \( H = V \rtimes H_0 \),
	\( H_0 \leq \GaL(V) \). Since \( G_0 \) is \( (k-1) \)-equivalent to \( H_0 \), Proposition~\ref{twotens}
	implies that \( (H_0)_{V_0} \) is \( (k-1) \)-equivalent to \( (G_0)_{V_0} \). By Lemma~\ref{affcl}
	and our assumptions, \( (H_0)_{V_0} \) lies in \( \X \). Since \( (H_0)_{\F_q} \leq \GL_1(q) \),
	it follows that \( H_0 \) lies in class \( \X \), as required.
\end{proof}

\subsection{\texorpdfstring{Stabilizers of \( X \otimes \dots \otimes X \)}{Stabilizers of multifold tensor product}}

Let \( X \) be a vector space over a finite field \( \F \), and let \( V = X \otimes \dots \otimes X \) be the tensor
product of \( m \geq 2 \) copies of \( X \). An element \( g_1 \otimes \dots \otimes g_m \) of the central product
\( \GL(X) \otimes \dots \otimes \GL(X) \) acts on \( V \) naturally,
indeed, if \( x_1 \otimes \dots \otimes x_m \in V \), where \( x_i \in X \), \( i = 1, \dots, m \),
then \( (x_1 \otimes \dots \otimes x_m)^{g_1 \otimes \dots \otimes g_m} = x_1^{g_1} \otimes \dots \otimes x_m^{g_m} \).

A field automorphism \( \sigma \in \aut(\F) \) acts on \( V \) componentwise:
\[ (x_1 \otimes \dots \otimes x_m)^\sigma = x_1^\sigma \otimes \dots \otimes x_m^\sigma. \]
Finally, a permutation \( \tau \in \sym(m) \) acts on \( V \) by permuting components of simple tensors:
\[ (x_1 \otimes \dots \otimes x_m)^\tau = x_{\tau^{-1}(1)} \otimes \dots \otimes x_{\tau^{-1}(m)}, \]
and this action extends to the rest of \( V \) by linearity.
Since the action of field automorphisms commutes with the action of \( \sym(m) \),
we obtain the action of \( (\GL(X) \otimes \dots \otimes \GL(X)) \rtimes (\aut(\F) \times \sym(m)) \) on \( V \).
This is the stabilizer of the tensor decomposition \( V = X \otimes \dots \otimes X \), see~\cite[Formula~(4.4.3)]{XuGLP2011}.

Set \( L = (\GL(X) \otimes \dots \otimes \GL(X)) \rtimes (\aut(\F) \times \sym(m)) \) and as usual, identify
\( \F^\times \) with the center of \( \GL(X) \). For \( g \in L \) we can write
\( g = (g_1 \otimes \dots \otimes g_m)\sigma\tau \), where \( g_i \in \GL(X) \), \( i = 1, \dots, m \), \( \sigma \in \aut(\F) \)
and \( \tau \in \sym(m) \). For \( i \in \{ 1, \dots, m \} \) set \( \pi_i(g) = \F^\times \cdot g_i\sigma \),
and notice that it gives a correctly defined map \( \pi_i : L \to \PGaL(X) \).
The restriction of \( \pi_i \) to \( \GL(X) \otimes \dots \otimes \GL(X) \)
is a homomorphism into \( \PGaL(X) \) with kernel \( K_i = \GL(X) \otimes \dots \otimes \F^\times \otimes \dots \otimes \GL(X) \),
where \( \F^\times \) is on the \( i \)-th position in the central product.
We can also define a homomorphism \( \pi : L \to \sym(m) \) by the rule \( \pi(g) = \tau \).
The kernel of \( \pi \) is equal to \( (\GL(X) \otimes \dots \otimes \GL(X)) \rtimes \aut(\F) \).

Now define a map \( \Pi : L \to \PGaL(X) \wr \sym(m) \) by
\[ \Pi(g) = (\pi_1(g), \dots, \pi_m(g))\pi(g), \]
where \( g \in L \). Clearly \( \Pi \) is a homomorphism with kernel \( \bigcap_i K_i = \F^\times \).

We can define a faithful action of the abstract wreath product \( \PGaL(X) \wr \sym(m) \) on
\[ \mathcal{L} = \{ \F \cdot (v_1 \otimes \dots \otimes v_m) \mid v_i \in X,\, i = 1, \dots, m \} \]
which is the set of lines spanned by simple tensors. To do this, let \( \PGaL(X) \) act on the lines of \( X \),
and let \( \sym(m) \) permute the coordinates. The corresponding action is a product action,
so we may identify \( \PGaL(X) \wr \sym(m) \) with a wreath product in the product action \( \PGaL(X) \uparrow \sym(m) \leq \sym(\mathcal{L}) \),
The group \( L \) also acts on \( \mathcal{L} \) naturally, and one can easily see that \( g \in L \) and \( \Pi(g) \) act on \( \mathcal{L} \) in the same way,
i.e.\ they represent the same permutation from \( \sym(\mathcal{L}) \).

Let \( G \leq L \) be a subgroup containing \( \F^\times \), and assume that \( \pi(G) \) is a transitive
subgroup of \( \sym(m) \). Let \( G_X \leq \GaL(X) \) be the projection of \( G \) into the first component
of the tensor product, i.e.
\[ G_X = \{ g_1\sigma \mid (g_1 \otimes \dots \otimes g_m)\sigma\tau \in G,
\text{ where } \sigma \in \aut(\F),\, \tau \in \sym(m),\, \tau(1) = 1 \}. \]
Since \( \pi(G) \) is transitive, the projection \( G_X \) does not depend on the labelling of points \( \{ 1, \dots, m \} \)
up to conjugation in \( \GaL(X) \). One can similarly define the projection of \( \Pi(G) \leq \PGaL(X) \uparrow \sym(m) \)
into \( \PGaL(X) \); notice that this projection is \( G_X/\F^\times \).

By~\cite[Section~5.3]{PSn2018} the group \( \Pi(G) \leq \PGaL(X) \uparrow \sym(m) \) can be embedded into a wreath
product \( (G_X/\F^\times) \uparrow \pi(G) \), where \( G_X/\F^\times \) is the 1-component of \( \Pi(G) \).
In particular, \( G \) lies in the class \( \X \) if and only if \( G_X \) and \( \pi(G) \) lie in the class \( \X \).

\begin{proposition}\label{manytens}
	Let \( X \) be a vector space over a finite field \( \F \) with \( \dim X \geq 2 \). Let \( G \) and \( H \)
	be subgroups of \( (\GL(X) \otimes \dots \otimes \GL(X)) \rtimes (\aut(\F) \times \sym(m)) \), \( m \geq 2 \),
	preserving the \( m \)-fold tensor power \( X \otimes \dots \otimes X \), and assume
	that \( G \), \( H \) contain \( \F^\times \) and \( \pi(G) \), \( \pi(H) \) are transitive
	subgroups of \( \sym(m) \). Suppose that \( G \) and \( H \) are \( k \)-equivalent for some \( k \geq 3 \).
	Then \( G_X \) and \( H_X \) are \( k \)-equivalent, and \( \pi(G)^{[r]} = \pi(H)^{[r]} \) for \( r = \min \{|\mathrm{Orb}_k(G_X /\F^\times)|,\, m \} \),
	where \( G_X/\F^\times \) acts on the set of lines of \( X \) and \( \mathrm{Orb}_k(G_X / \F^\times) \) is the set of orbits on \( k \)-tuples
	in this action.
\end{proposition}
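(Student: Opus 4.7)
The proof establishes the two conclusions by separate arguments that both exploit the $k$-equivalence of $G$ and $H$ on $V$. The statement about $G_X$ and $H_X$ comes from a direct tensor-probing argument extending the proof of Proposition~\ref{twotens}, while the statement $\pi(G)^{[r]} = \pi(H)^{[r]}$ is obtained by transferring the $k$-equivalence to the set $\mathcal{L}$ of lines spanned by simple tensors and invoking the wreath-product closure formula of~\cite{PV2021}.

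For the $k$-equivalence of $G_X$ and $H_X$, take $x_1, \dots, x_k \in X$ and $\gamma_X = g_1\sigma \in G_X$ arising from $g = (g_1 \otimes \dots \otimes g_m)\sigma\tau \in G$ with $\tau(1) = 1$. Fix a nonzero $y \in X$, form the simple tensors $v_i = x_i \otimes y \otimes \dots \otimes y$, and apply $k$-equivalence of $G$ and $H$ on $V$ via Lemma~\ref{krellem} to produce $h = (h_1 \otimes \dots \otimes h_m)\sigma'\tau' \in H$ with $v_i^g = v_i^h$ for all $i$. If $\tau'(1) = 1$, the simple-tensor equalities force $x_i^{\gamma_X} = \alpha \cdot x_i^{h_1\sigma'}$ with a single scalar $\alpha \in \F^\times$ independent of $i$, by the same reasoning as in Proposition~\ref{twotens}, since the remaining tensor factors of $v_i^g$ and $v_i^h$ are independent of $i$; then $\F^\times \leq H_X$ absorbs $\alpha$ into the required $\beta_X \in H_X$. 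If $\tau'(1) \neq 1$, the first tensor factor of $v_i^h$ does not depend on $i$, which forces all $x_i$ to be collinear; this degenerate case is reduced to the previous one by adjoining an auxiliary vector linearly independent from $x_1$ (possible since $\dim X \geq 2$) and using linearity.

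For the partition equality, I first show that $\Pi(G)$ and $\Pi(H)$ are $k$-equivalent as subgroups of $\sym(\mathcal{L})$: given $\ell_1, \dots, \ell_k \in \mathcal{L}$ and $g \in G$, choose simple-tensor representatives $v_i \in \ell_i$ and apply $k$-equivalence of $G, H$ on $V$ to obtain $h \in H$ with $v_i^g = v_i^h$, which passes to $\ell_i^{\Pi(g)} = \ell_i^{\Pi(h)}$. Since $\pi(G)$ is transitive, the discussion preceding the proposition gives $\Pi(G) \leq (G_X/\F^\times) \uparrow \pi(G)$ in product action on $\mathcal{L}$. The main result of~\cite{PV2021} computes the $k$-closure of this wreath product as $(G_X/\F^\times)^{(k)} \uparrow \pi(G)^{[r]}$ with $r = \min\{|\mathrm{Orb}_k(G_X/\F^\times)|, m\}$. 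By monotonicity of $k$-closure and the $k$-equivalence of $\Pi(G)$ and $\Pi(H)$, we obtain $\Pi(H) \leq \Pi(G)^{(k)} \leq (G_X/\F^\times)^{(k)} \uparrow \pi(G)^{[r]}$. Projecting onto the $1$-component and onto $\sym(m)$ yields $H_X/\F^\times \leq (G_X/\F^\times)^{(k)}$ and $\pi(H) \leq \pi(G)^{[r]}$; the symmetric containments follow by swapping $G$ and $H$, and taking $k$-closures (respectively $[r]$-closures) together with idempotency gives $(G_X/\F^\times)^{(k)} = (H_X/\F^\times)^{(k)}$ (so the values of $r$ agree) and $\pi(G)^{[r]} = \pi(H)^{[r]}$.

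The main obstacle is verifying that the wreath-product closure formula of~\cite{PV2021} applies to $(G_X/\F^\times) \uparrow \pi(G)$ in our setting: the action on $\mathcal{L}$ need not be primitive because $G_X/\F^\times$ is not assumed transitive on the lines of $X$, so the formula must be checked under the sole transitivity assumption on $\pi(G)$. The hypothesis $k \geq 3$ enters precisely here, guaranteeing the nondegeneracy condition $|\mathrm{Orb}_k(G_X/\F^\times)| \geq k+1$ that is available whenever $\dim X \geq 2$ and that makes the exponent $r$ in the formula well-defined.
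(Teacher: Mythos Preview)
Your overall strategy matches the paper's: prove $G_X$ and $H_X$ are $k$-equivalent by a tensor-probing argument, and deduce $\pi(G)^{[r]} = \pi(H)^{[r]}$ by showing $\Pi(G)$ and $\Pi(H)$ are $k$-equivalent on $\mathcal{L}$ and then applying the wreath-product closure machinery. The second part is essentially identical to the paper's; your flagged concern about primitivity is legitimate but harmless, since the proof of Proposition~\ref{basicEmb}(iii) (and the underlying result of~\cite{PV2021}) only uses that the groups preserve the product decomposition, not that the action is primitive.

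There is, however, a real gap in your treatment of the collinear case for the first claim. When all $x_i$ lie on a line $\F u$, you propose to ``adjoin an auxiliary vector $w$ linearly independent from $x_1$ and use linearity.'' But the elements of $G_X$ and $H_X$ are only \emph{semilinear}: knowing that $\gamma_X$ and some $\beta_X \in H_X$ agree on $u$ and on $w$ does \emph{not} force agreement on $\alpha u$ for arbitrary $\alpha \in \F^\times$, because $(\alpha u)^{\gamma_X} = \alpha^\sigma u^{\gamma_X}$ while $(\alpha u)^{\beta_X} = \alpha^{\sigma'} u^{\beta_X}$, where $\sigma, \sigma'$ are the (possibly different) field automorphisms carried by $\gamma_X, \beta_X$. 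Additivity alone cannot recover the missing $x_k$. The paper handles this by applying $3$-equivalence to the triple of simple tensors built from $u$, $\omega u$, and $w$, with $\omega$ a cyclic generator of $\F^\times$; comparing the images of $u$ and $\omega u$ forces $\omega^\sigma = \omega^{\sigma'}$ and hence $\sigma = \sigma'$, after which agreement on all of $\F u$ follows. This is precisely where the hypothesis $k \geq 3$ is genuinely needed in the first claim---not, as you suggest, only in the nondegeneracy bound $|\mathrm{Orb}_k(G_X/\F^\times)| \geq k+1$ for the second claim.
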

\begin{proof}
	We will use the following notation throughout the proof. Given \( s \) simple tensors \( v_i = v_{i1} \otimes \dots \otimes v_{im} \),
	\( i = 1, \dots, s \), let \( T(v_1, \dots, v_s) \) denote the \( s \times m \) matrix:
	\[ T(v_1, \dots, v_s) =
	\begin{pmatrix}
		\F \cdot v_{11} & \dots & \F \cdot v_{1m}\\
		\vdots & & \vdots\\
		\F \cdot v_{s1} & \dots & \F \cdot v_{sm}\\
	\end{pmatrix}
	\]
	Since each entry of this matrix is a line in \( X \), the matrix \( T(v_1, \dots, v_s) \) is defined correctly
	and does not depend on the choice of \( v_{i1}, \dots, v_{im} \) in the decomposition of \( v_i \), \( i = 1, \dots, s \).
	\smallskip

	\textit{Claim: \( G_X \) is \( k \)-equivalent to \( H_X \).}
	We will prove that \( \krel(H_X) \subseteq \krel(G_X) \). 
	Let \( x_1, \dots, x_k \in X \) be arbitrary nonzero vectors, and let \( g_X \in G_X \) be an arbitrary element.
	Choose \( g \in G \) such that \( g = (g_1 \otimes \cdots \otimes g_m)\sigma\tau \), where \( \sigma \in \aut(\F) \),
	\( \tau \in \sym(m) \), \( \tau(1) = 1 \) and \( g_X = g_1\sigma \).
	Assume first that \( x_1, \dots, x_k \) lie on the same line, i.e.\ for some nonzero \( u \in X \)
	we have \( x_i \in \F^\times\cdot u \) for all \( i = 1, \dots, k \).

	Let \( \omega \) be a cyclic generator of \( \F^\times \). Since \( \dim X \geq 2 \) we can find
	a vector \( w \in X \) which is linearly independent from \( u \).
	Setting \( v = u^{g_X} \) we have \( (\omega \cdot u)^{g_X} = \omega^\sigma \cdot v \).
	Now consider three simple tensors: \( a = u \otimes w \otimes \dots \otimes w \),
	\( b = \omega \cdot u \otimes w \otimes \dots \otimes w \) and \( c = w \otimes w \otimes \dots \otimes w \),
	which differ only in the first component. By the definition of \( G_X \), the element \( g_X \) stabilizes
	the first component, therefore
	\begin{align*}
		a^g = v \otimes & w^{g_{\tau^{-1}(2)}\sigma} \otimes \dots \otimes w^{g_{\tau^{-1}(m)}\sigma},\\
		b^g = \omega^\sigma \cdot v \otimes & w^{g_{\tau^{-1}(2)}\sigma} \otimes \dots \otimes w^{g_{\tau^{-1}(m)}\sigma},\\
		c^g = w^{g_X} \otimes & w^{g_{\tau^{-1}(2)}\sigma} \otimes \dots \otimes w^{g_{\tau^{-1}(m)}\sigma}.
	\end{align*}
	It is readily seen that each column of the matrix \( T(a^g, b^g, c^g) \) has all coordinates equal to each other
	with the exception of the first column. The first column does not have all coordinates equal to each other,
	as vectors \( v \), \( \omega^\sigma \cdot v \) and \( w^{g_X} \) do not lie on the same line.

	Now, groups \( G \) and \( H \) are \( k \)-equivalent, in particular, they are \( 3 \)-equivalent,
	so there exists an element \( h \in H \) such that \( a^g = a^h \), \( b^g = b^h \) and \( c^g = c^h \).
	Write \( h = (h_1 \otimes \dots \otimes h_m)\phi\mu \), where \( \phi \in \aut(\F) \) and \( \mu \in \sym(m) \).
	It follows that
	\begin{align*}
		a^h &= w^{h_{\mu^{-1}(1)}\phi} \otimes \dots \otimes u^{h_1\phi} \otimes \dots \otimes w^{h_{\mu^{-1}(m)}\phi},\\
		b^h &= w^{h_{\mu^{-1}(1)}\phi} \otimes \dots \otimes \omega^\phi \cdot u^{h_1\phi} \otimes \dots \otimes w^{h_{\mu^{-1}(m)}\phi},\\
		c^h &= w^{h_{\mu^{-1}(1)}\phi} \otimes \dots \otimes w^{h_1\phi} \otimes \dots \otimes w^{h_{\mu^{-1}(m)}\phi},
	\end{align*}
	where the ``central'' component of each tensor is on position~\( \mu(1) \). Since \( u^{h_1\phi} \),
	\( \omega^\phi \cdot u^{h_1\phi} \) and \( w^{h_1\phi} \) do not lie on the same line, each column of the matrix
	\( T(a^h, b^h, c^h) \) has equal coordinates with the exception of column number~\( \mu(1) \).
	Since \( T(a^g, b^g, c^g) = T(a^h, b^h, c^h) \), that must be the first column and hence \( \mu(1) = 1 \).

	Equality \( a^g = a^h \) of simple tensors implies that there exist scalars \( \alpha_i \in \F^\times \),
	\( i = 1, \dots, m \), such that \( v = \alpha_1 \cdot u^{h_1\phi} \) and
	\( w^{g_{\tau^{-1}(i)}\sigma} = \alpha_i \cdot w^{h_{\mu^{-1}(i)}\phi} \) for \( i = 2, \dots, m \),
	and \( \alpha_1 \cdot \alpha_2 \cdot \dots \cdot \alpha_m = 1 \).
	Similarly, \( b^g = b^h \) implies that there exist scalars \( \beta_i \in \F^\times \),
	\( i = 1, \dots, m \), with \( \omega^\sigma \cdot v = \beta_1\omega^\phi \cdot u^{h_1\phi} \) and
	\( w^{g_{\tau^{-1}(i)}\sigma} = \beta_i \cdot w^{h_{\mu^{-1}(i)}\phi} \) for \( i = 2, \dots, m \),
	and \( \beta_1 \cdot \dots \beta_m = 1 \).
	Since the vector \( w \) is nonzero, equalities
	\begin{align*}
		w^{g_{\tau^{-1}(i)}\sigma} &= \alpha_i \cdot w^{h_{\mu^{-1}(i)}\phi},\\
		w^{g_{\tau^{-1}(i)}\sigma} &= \beta_i \cdot w^{h_{\mu^{-1}(i)}\phi}
	\end{align*}
	imply \( \alpha_i = \beta_i \) for \( i = 2, \dots, m \). It follows that \( \alpha_1 = \beta_1 = \alpha \)
	and hence \( v = \alpha \cdot u^{h_1\phi} \) and \( \omega^\sigma \cdot v = \alpha\omega^\phi \cdot u^{h_1\phi} \).
	We can choose an element \( z \in \F^\times \) such that \( v = u^{zh_1\phi} \) and
	\( \omega^\sigma \cdot v = \omega^\phi \cdot u^{zh_1\phi} \). We derive \( \omega^\sigma = \omega^\phi \),
	and since \( \omega \) is a cyclic generator of \( \F^\times \), we have \( \sigma = \phi \).
	Finally, as \( \F^\times \leq H_X \) and \( \mu(1) = 1 \), the element \( h'_X = zh_1\sigma \) lies in \( H_X \).
	To sum up, \( u^{g_X} = v = u^{h'_X} \) and \( (\alpha \cdot u)^{g_X} = \alpha^\sigma \cdot v = (\alpha \cdot u)^{h'_X} \)
	for any \( \alpha \in \F^\times \).

	Recall that \( x_i \in \F^\times \cdot u \), \( i = 1, \dots, k \). There exist scalars \( \alpha_i \in \F^\times \)
	such that \( x_i = \alpha_i \cdot u \), \( i = 1, \dots, k \). It follows that for any \( i \in \{ 1, \dots, m \} \)
	\[ x_i^{g_X} = (\alpha_i \cdot u)^{g_X} = (\alpha_i \cdot u)^{h'_X} = x_i^{h'_X}, \]
	and this finishes the proof of the case when all \( x_i \) lie on the same line.

	Now assume \( x_1, \dots, x_k \in X \) do not lie on the same line. Choose some nonzero vector \( w \in X \)
	and define simple tensors \( v_i = x_i \otimes w \otimes \dots \otimes w \), \( i = 1, \dots, k \),
	which differ only in the first component. Arguing as before, we see that each column of the matrix
	\( T(v_1^g, \dots, v_k^g) \) has equal coordinates, with the exception of the first column.
	The first column does not have all coordinates equal to each other as \( x_i^{g_X} \), \( i = 1, \dots, k \),
	do not lie on the same line.

	Since \( G \) and \( H \) are \( k \)-equivalent, there exists an element \( h \in H \) such that
	\( v_i^g = v_i^h \) for all \( i = 1, \dots, k \). Write \( h = (h_1 \otimes \dots \otimes h_m)\phi\mu \),
	where \( \phi \in \aut(\F) \), \( \mu \in \sym(m) \). All columns of the matrix \( T(v_1^h, \dots, v_k^h) \)
	have equal coordinates with the exception of column number \( \mu(1) \), which has coordinates
	\( x_1^{h_1\phi}, \dots, x_k^{h_1\phi} \). Since \( T(v_1^g, \dots, v_k^g) = T(v_1^h, \dots, v_k^h) \),
	we derive \( \mu(1) = 1 \).

	Equality \( v_i^g = v_i^h \), \( i = 1, \dots, k \), of simple tensors implies that there exist scalars
	\( \alpha_{ij} \in \F^\times \) such that for every \( i \in \{ 1, \dots, k \} \) we have
	\( x_i^{g_X} = \alpha_{i1} \cdot x_i^{h_1\phi} \) and \( w^{g_j\sigma} = \alpha_{ij} \cdot w^{h_j\phi} \)
	for \( j = 2, \dots, m \), and \( \prod_{j = 1}^m \alpha_{ij} = 1 \). Since the vector \( w \) is nonzero,
	scalars \( \alpha_{ij} \), \( j = 2, \dots, m \), do not depend on \( i \). It follows that \( \alpha_{i1} \)
	does not depend on \( i \) and we can set \( \alpha = \alpha_{i1} \) for all \( i = 1, \dots, k \).

	We have \( x_i^{g_X} = \alpha \cdot x_i^{h_1\phi} \) for \( i = 1, \dots, k \). Choose an element \( z \in \F^\times \)
	such that \( x_i^{g_X} = x_i^{zh_1\phi} \) for \( i = 1, \dots, k \). Since \( \F^\times \leq H_X \) and \( \mu(1) = 1 \), the element
	\( h'_X = zh_1\phi \) lies in \( H_X \) and we have \( x_i^{g_X} = x_i^{h'_X} \), \( i = 1, \dots, k \).
	This finishes the proof of the case when not all \( x_i \), \( i = 1, \dots, k \), lie on the same line.

	It follows that \( \krel(H_X) \subseteq \krel(G_X) \), and by repeating the same argument for \( G \)
	and \( H \) with interchanged roles, we derive \( \krel(H_X) = \krel(G_X) \), i.e.\ \( G_X \) and \( H_X \)
	are \( k \)-equivalent. The first claim is proved.
	\smallskip

	\textit{Claim: \( \pi(G)^{[r]} = \pi(H)^{[r]} \).}
	By Proposition~\ref{basicEmb}~(iii) it is sufficient to prove that \( \Pi(G) \) is \( k \)-equivalent to \( \Pi(H) \).
	Let \( (\F v_1, \dots, \F v_k) \) be a \( k \)-tuple of lines for some \( v_1, \dots, v_k \in X \otimes \dots \otimes X \).
	Take some \( g \in \Pi(G) \) and let \( g' \in G \) be such that \( g = \Pi(g') \).
	Define \( u_i = v_i^{g'} \) for \( i = 1, \dots, k \). It follows that
	\[ (\F v_1, \dots, \F v_k)^g = (\F v_1, \dots, \F v_k)^{g'} = (\F u_1, \dots, \F u_k). \]
	Since \( G \) and \( H \) are \( k \)-equivalent, there exists \( h' \in H \) such that \( u_i = v_i^{h'} \), \( i = 1, \dots, k \).
	Setting \( h = \Pi(h) \in \Pi(H) \) we have
	\[ (\F v_1, \dots, \F v_k)^h = (\F v_1, \dots, \F v_k)^{h'} = (\F u_1, \dots, \F u_k) = (\F v_1, \dots, \F v_k)^g. \]
	Hence \( \Pi(G) \) and \( \Pi(H) \) are \( k \)-equivalent by Lemma~\ref{krellem} and the claim is proved.
\end{proof}

\begin{corollary}\label{clPower}
	Let \( V \) be a vector space over a finite field \( \F \), and let \( G \) be a primitive affine permutation
	group with socle \( V \). Set \( G = V \rtimes G_0 \), where \( G_0 \leq \GaL(V) \), and assume that \( \F^\times \leq G_0 \).
	Suppose that \( G_0 \) preserves a tensor decomposition \( V = X \otimes \dots \otimes X \), where \( X \) is a vector
	space over \( \F \) and there are \( m \geq 2 \) tensor factors in the decomposition of \( V \).
	Assume that \( k \geq 4 \) and the \( k \)-closures of \( X \rtimes (G_0)_X \) and \( \pi(G_0) \) lie in the class \( \X \).
	Then \( G^{(k)} \) lies in the class \( \X \).
\end{corollary}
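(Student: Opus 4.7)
The plan is to parallel the proof of Corollary~\ref{clTens}, with Proposition~\ref{manytens} playing the role of Proposition~\ref{twotens}; the new ingredient is the partition-closure datum $\pi(G_0)^{[r]} = \pi(H_0)^{[r]}$, which one must convert into usable information about $\pi(H_0)$ itself via Proposition~\ref{partClkCl}. By Lemma~\ref{4claff}, $G^{(k)}$ is affine with socle $V$; write $G^{(k)} = V \rtimes H_0$ with $H_0 \leq \GaL(V)$, so $G_0$ and $H_0$ are $(k-1)$-equivalent by Proposition~\ref{basicProps}~(iii). Since $G_0$ stabilizes $V = X \otimes \dots \otimes X$ (Aschbacher class $\C_7$), Proposition~\ref{xukcl}~(i), together with the uniqueness of the corresponding stabilizer relation (cf.\ the analogous step in the proof of Corollary~\ref{clTens} and~\cite[Section~4.4]{XuGLP2011}), shows that $H_0$ preserves the same decomposition. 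Moreover $\F^\times \leq G_0 \leq H_0$, since $H_0$ is the zero stabilizer in $G^{(k)} \geq G$, and both $\pi(G_0)$ and $\pi(H_0)$ are transitive on the $m$ tensor factors by the primitivity of $G$ and $G^{(k)}$.

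Applying Proposition~\ref{manytens} to $G_0$, $H_0$ (with $k-1 \geq 3$) yields that $(G_0)_X$ is $(k-1)$-equivalent to $(H_0)_X$ and
\[ \pi(G_0)^{[r]} = \pi(H_0)^{[r]}, \qquad r = \min\{|\mathrm{Orb}_{k-1}((G_0)_X/\F^\times)|,\, m\}. \]
Combining the tuple equivalence with Lemma~\ref{affcl} and the hypothesis on $(X \rtimes (G_0)_X)^{(k)}$,
\[ X \rtimes (H_0)_X \;\leq\; X \rtimes \bigl((G_0)_X^{(k-1)} \cap \GL(X)\bigr) \;=\; (X \rtimes (G_0)_X)^{(k)} \;\in\; \X, \]
whence $(H_0)_X \in \X$ by completeness.

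For $\pi(H_0)$ it suffices, in view of $\pi(H_0) \leq \pi(G_0)^{[r]}$, to show $\pi(G_0)^{[r]} \leq \pi(G_0)^{(k)}$, the latter lying in $\X$ by hypothesis. Since $\dim X \geq 2$ the set of lines of $X$ has at least three elements, and a direct orbit count (refining the remark following Proposition~\ref{basicEmb}) gives $|\mathrm{Orb}_{k-1}((G_0)_X/\F^\times)| \geq k+1$ for $k \geq 4$, so $r \geq \min\{k+1, m\}$. If $r \geq k+1$, Proposition~\ref{partClkCl} yields $\pi(G_0)^{[r]} \leq \pi(G_0)^{(r-1)} \leq \pi(G_0)^{(k)}$; otherwise $r = m \leq k$ and $\pi(G_0)^{[r]} = \pi(G_0)^{[m]} = \pi(G_0) \leq \pi(G_0)^{(k)}$. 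Either way $\pi(H_0) \in \X$. Finally, the discussion preceding Proposition~\ref{manytens} embeds $H_0/\F^\times$ into $((H_0)_X/\F^\times) \uparrow \pi(H_0)$, so $H_0 \in \X$ by completeness, and hence $G^{(k)} = V \rtimes H_0 \in \X$. The technical crux is precisely this dichotomy: Proposition~\ref{manytens} supplies only the partition closure $\pi(H_0)^{[r]}$, and the orbit-count lower bound is what ensures that either $r \geq k+1$ (so Proposition~\ref{partClkCl} matches the hypothesis on $\pi(G_0)^{(k)}$) or $r = m$ (so the partition closure collapses to $\pi(G_0)$ itself).
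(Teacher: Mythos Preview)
Your proof is correct and follows essentially the same route as the paper's: apply Lemma~\ref{4claff} and Proposition~\ref{xukcl}~(i) to place $H_0$ in the same tensor stabilizer, invoke Proposition~\ref{manytens} at level $k-1$, use Lemma~\ref{affcl} to pass from $(k-1)$-equivalence to the affine $k$-closure for $(H_0)_X$, and then split on whether $r\ge k+1$ or $r=m$ to control $\pi(H_0)$ via Proposition~\ref{partClkCl}. The only cosmetic difference is that in the case $r\ge k+1$ you pass through $\pi(G_0)^{(r-1)}$ whereas the paper passes through $\pi(G_0)^{[k+1]}$; both chains are valid. You are also slightly more explicit than the paper in noting that $\F^\times\le H_0$, that $\pi(G_0),\pi(H_0)$ must be transitive for Proposition~\ref{manytens} to apply, and in spelling out the final embedding of $H_0/\F^\times$ into the wreath product.
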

\begin{proof}
	By Lemma~\ref{4claff}, the \( k \)-closures of \( G \) and \( X \rtimes (G_0)_X \) are affine groups with socles \( V \)
	and \( X \), respectively. Set \( H = G^{(k)} \) and \( H = V \rtimes H_0 \), so \( G_0 \) is \( (k-1) \)-equivalent to \( H_0 \).
	By Proposition~\ref{xukcl}~(i) and~\cite[Lemma~4.4.5~(2)]{XuGLP2011}, \( H_0 \) preserves the tensor decomposition \( V = X \otimes \dots \otimes X \),
	and by Proposition~\ref{manytens}, the group \( (G_0)_X \) is \( (k-1) \)-equivalent to \( (H_0)_X \),
	and \( \pi(G_0)^{[r]} = \pi(H_0)^{[r]} \), where \( r = \min \{ |\mathrm{Orb}_{k-1}((G_0)_X / \F^\times)|,\, m\} \).
	By Lemma~\ref{affcl} and our assumptions, \( (H_0)_X \) lies in the class \( \X \).

	Now, as \( k-1 \geq 3 \), we have \( |\mathrm{Orb}_{k-1}((G_0)_X / \F^\times)| \geq k+1 \) since \( X \) contains at least 3 lines.
	If \( m < k+1 \), then \( r = m \) and \( \pi(G_0)^{[r]} = \pi(G_0)^{[m]} = \pi(G_0) \), where the last equality follows from the definition of the closure with
	respect to partitions. If \( m \geq k+1 \), then \( r \geq k+1 \) and \( \pi(G_0)^{[r]} \leq \pi(G_0)^{[k+1]} \leq \pi(G_0)^{(k)} \),
	where the last inclusion follows from Proposition~\ref{partClkCl}. In either case \( \pi(H_0) \leq \pi(G_0)^{[r]} \) lies in the class \( \X \), so the claim is proved.
\end{proof}

\section{Proof of Theorem~\ref{t:main}}\label{secProof}

Set \( H = G^{(4)} \).
By Proposition~\ref{basicProps}~(i) it suffices to prove that \( H \in \X \).
Suppose that \( G \) is a counterexample to Theorem~\ref{t:main} of minimal degree~\( n \).
Note that \( n > 24 \), because otherwise \( H \) is \( \alt(25) \)-free and hence lies in \( \X \).
By~\cite[Theorem~3.1~(i)]{PV2024} (see also Proposition~\ref{basicEmb}), we may assume that \( G \) is a primitive basic permutation group.

Since $G$ is primitive, \cite[Theorem~2]{PS1992} implies that
\begin{equation}\label{socles}
	\soc(G)=\soc(H)
\end{equation}
unless $G$ is $4$-transitive. By \cite[Tables~7.3 and~7.4]{CB}), if $G$ is $4$-transitive and $n>24$, then $G\geq\alt(n)$, so \( \alt(n) \in \X \)
and hence \( H = \sym(n) \in \X \), a contradiction.
The case \( n \leq 24 \) is not possible, so we may assume that equality~\eqref{socles} holds.
\medskip

Since $G$ is basic, applying the O'Nan-Scott theorem~\cite{LPS1988b}, we arrive to the following three cases:
\begin{enumerate}[(i)]
	\item $G$ is almost simple,
	\item $G$ is in a {\em diagonal action},
	\item $G$ is an affine group.
\end{enumerate}

In the case~(i), equality \eqref{socles} implies that the group $H/\soc(G)$ is solvable and hence $H \in \X$, a contradiction.\medskip

Suppose that $G$ is in a diagonal action as in the case~(ii).
Then $S=\soc(G)=T^m$, where $T$ is a nonabelian simple group, and $G/S$ is a subgroup of $\out(T)\times L$,
where $L$ is the symmetric group of degree $m$ acting faithfully on the set simple factors of the socle by conjugation.
By~\eqref{socles}, $\soc(H)=S$, so $H/S$ is also a subgroup of $\out(T)\times L$. If $m\leq 4$ or $G/S$ includes the alternating subgroup of $L$,
then the nonabelian composition factors of $G$ and $H$ are the same, and we are done.
Otherwise, \( G \) has a base of size~2 in view of \cite[Theorem~1.1]{Faw}, and $G^{(3)} = G$ by Proposition~\ref{basicProps}~(vi). Hence \( H=G\in\X\), a contradiction.

Thus, we may assume that $G$ is an affine group.
Observe that in this case \( H \) is also an affine group by~\eqref{socles}, and a primitive basic group by Proposition~\ref{basicProps}~(ii)
and Proposition~\ref{basicEmb}~(iii).
\medskip

By~\cite{BPVV2021}, we may assume that \( G \) is not solvable.
Let \( V \) be the socle of \( G \) and let \( G_0 \leq \GL(V) \) be the zero stabilizer, so \( G = V \rtimes G_0 \).
We may write \( H = V \rtimes H_0 \) for the zero stabilizer \( H_0 \leq \GL(V) \). Moreover, \( H_0 = G_0^{(3)} \cap \GL(V) \) by Lemma~\ref{affcl}.

We wish to apply Aschbacher's theorem~\cite{A1984} to \( G_0 \) and~\( H_0 \).
By Proposition~\ref{xukcl}, \( G_0 \) and \( H_0 \) lie in the same Aschbacher classes.
Set \( |V| = p^s \), where \( p \) is a prime. Choose the minimal \( a \geq 1 \) dividing \( s \) such that \( G_0 \leq \GaL_a(q) \) where \( q = p^{s/a} \).
Note that \( a \geq 2 \) since \( G \) is not solvable.
By Proposition~\ref{basicProps}~(iv), we may also assume that \( \F_q^\times \leq G_0 \), since \( \F_q^\times \cdot G_0 \) still lies in the class~\( \X \).
Note that the corresponding affine group \( V \rtimes (\F_q^\times \cdot G_0) \) is a permutation group of degree \( n = |V| \),
so it is still a counterexample of minimal degree.

Since \( G_0 \) preserves the structure of the extension field \( \F_q \) of \( \F_p \) (class \( \C_3 \) for Aschbacher's theorem in \( \GL(V) \)),
the group \( H_0 \) also preserves this field and hence \( H_0 \leq \GaL_a(q) \).
Now, we use Aschbacher's classification inside \( \GaL_a(q) \). The group \( G_0 \) is irreducible (since \( G \) is primitive) and primitive as a linear group (since \( G \) is basic),
hence the same is true for \( H_0 \). As \( a \) is minimal, \( G_0 \) does not preserve any extension field of \( \F_q \), hence \( G_0 \) and thus \( H_0 \)
do not lie in class \( \C_3 \). If \( G_0 \) preserves a tensor product decomposition (class \( \C_4 \)), a subfield (class \( \C_5 \)),
or a tensor power (class \( \C_7 \)), then from the minimality of the counterexample \( G \) and Corollary~\ref{clTens}, Proposition~\ref{clSubf} or Corollary~\ref{clPower}
respectively, it follows that \( H \in \X \), a contradiction.

If \( \mathrm{SL}_a(q) \leq G_0 \), then \( \mathrm{SL}_a(q) \leq H_0 \leq \GaL_a(q) \). Hence \( G_0 \) and \( H_0 \) have the same nonabelian composition factors,
and \( H \in \X \), a contradiction. Therefore, Aschbacher's theorem in \( \GaL_a(q) \) applies, and we deduce that either \( G_0 \) and \( H_0 \)
normalize a subgroup of symplectic type (class \( \C_6 \)), or they stabilize a symplectic, unitary or orthogonal form (class \( \C_8 \)),
or they lie in class \( \C_9 \). If \( G_0 \) stabilizes a form, then by Proposition~\ref{xukcl}~(i), \( H_0 \) preserves the same form and hence we can
apply Aschbacher's theorem inside a symplectic, unitary or orthogonal group. Note that if a group preserving some form also stabilizes a tensor decomposition
or a subfield, then we have classified that group as belonging to \( \C_4 \), \( \C_7 \) or \( \C_5 \) on the previous step.
Thus, either \( G_0 \) and \( H_0 \) lie in \( \C_6 \) or \( \C_9 \),
or \( G_0 \) contains \( \mathrm{Sp}_a(q) \), \( \mathrm{SU}_a(q^{1/2}) \) or \( \Omega_a^{\pm}(q) \), depending on the type of the form \( G_0 \) stabilizes.
In the latter case \( H_0 \) stabilizes the same form and contains the same classical group as \( G_0 \). Hence \( H_0 \) and \( G_0 \) have the same nonabelian composition
factors, which implies \( H_0 \in \X \), a contradiction.

Therefore, to finish the proof we are left to consider the cases when \( G_0 \) and \( H_0 \) lie in class \( \C_6 \) or \( \C_9 \).

\subsection{\texorpdfstring{The class \( \C_6 \).}{The class C6.}}\label{ss:c6}

The following lemma is probably well-known.

\begin{lemma}\label{regorb}
	Let \( R \) be a nilpotent group of class at most two acting faithfully and irreducibly on a finite vector space~\( V \).
	If \( |R| \leq |V|^{1/2} \), then \( R \) has a regular orbit on~\( V \).
\end{lemma}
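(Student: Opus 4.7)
The plan is to show that the set $U = \{ v \in V \setminus \{0\} : R_v \neq 1 \}$ of non-regular nonzero points has cardinality strictly less than $|V| - 1$, which forces some $v \in V$ to satisfy $R_v = 1$, giving a regular orbit.

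First I would apply Schur's lemma to the faithful irreducible action of $R$. The centralizer ring $\mathrm{End}_R(V)$ is a finite division ring, hence a field $\F$, and $V$ becomes an $\F$-vector space on which $R$ acts as a subgroup of $\GL_\F(V)$. Since $Z(R)$ commutes with $R$, we obtain $Z(R) \leq \F^\times$, so every nonidentity central element acts as a nonidentity scalar and fixes only the zero vector. In particular $R_v \cap Z(R) = 1$ for all $v \neq 0$, so $U \subseteq \bigcup_{r \in R \setminus Z(R)} \left( C_V(r) \setminus \{0\} \right)$.

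The crux of the argument is the estimate $|C_V(r)|^2 \leq |V|$ for each $r \in R \setminus Z(R)$. Because $R$ has class at most~$2$, $R' \leq Z(R)$, and since $r \notin Z(R)$ one can choose $s \in R$ with $t := [r,s] \in Z(R) \setminus \{1\}$. A direct computation yields $s r s^{-1} = r t^{-1}$, so $s \cdot C_V(r) = C_V(srs^{-1}) = C_V(r t^{-1})$. Any vector in $C_V(r) \cap C_V(r t^{-1})$ is fixed by $t^{-1}$ and hence is zero, since $t^{-1}$ acts as a nonidentity scalar. Therefore the $\F$-subspaces $C_V(r)$ and $s \cdot C_V(r)$ meet trivially, whence $2 \dim_\F C_V(r) \leq \dim_\F V$ and $|C_V(r)| \leq |V|^{1/2}$.

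Combining these ingredients,
\[ |U| \leq \sum_{r \in R \setminus Z(R)} \left( |C_V(r)| - 1 \right) \leq \left( |R| - |Z(R)| \right) \left( |V|^{1/2} - 1 \right). \]
If $R$ is abelian this sum is empty and every nonzero vector already gives a regular orbit (as $R \leq \F^\times$ acts semi-regularly on $V \setminus \{0\}$). Otherwise $R$ has class exactly $2$, so $|Z(R)| \geq 2$, and with the hypothesis $|R| \leq |V|^{1/2}$ the right-hand side is bounded by $(|V|^{1/2} - 2)(|V|^{1/2} - 1) < |V| - 1$ whenever $|V| \geq 4$; the smaller cases force $|R| = 1$ and are trivial. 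The main obstacle is the fixed-space bound $|C_V(r)|^2 \leq |V|$, which is precisely where the class-two hypothesis is essential: it guarantees that the commutator of any noncentral $r$ with a suitable $s$ is central and acts as a nontrivial scalar, producing two conjugate fixed subspaces meeting only in zero.
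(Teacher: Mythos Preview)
Your proof is correct and follows essentially the same approach as the paper: both arguments show that nontrivial central elements fix only~$0$ (you via Schur's lemma, the paper via irreducibility of the $R$-invariant subspace $C_V(x)$), then use the class-two commutator trick to obtain $|C_V(r)| \leq |V|^{1/2}$ for noncentral $r$, and finish with a union-of-fixed-subspaces count. The paper's counting step is slightly cruder (it bounds $|V| \leq \sum_{x \in R \setminus 1} |C_V(x)| < |R|\cdot|V|^{1/2}$ directly), whereas you separate central from noncentral elements and track the $-1$'s, but this extra care is not needed.
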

\begin{proof}
	Let \( x \in R \) be a nontrivial element. If \( x \) is an element of the center \( Z(R) \), then \( C_V(x) \) is an \( R \)-invariant subspace of \( V \),
	and since \( R \) is faithful and irreducible, \( C_V(x) = 0 \). If \( x \not\in Z(R) \), then there exists some \( g \in R \) such that \( [x, g] = x^{-1} x^g \) is nontrivial.
	It follows that \( C_V([x, g]) \supseteq C_V(x^{-1}) \cap C_V(x^g) \), and since \( [x, g] \in [R, R] \leq Z(R) \), we have \( C_V(x^{-1}) \cap C_V(x^g) = 0 \).
	Observe that \( |C_V(x^{-1})| = |C_V(x)| = |C_V(x^g)| \), hence \( |C_V(x)| \leq |V|^{1/2} \).

	Now, suppose that \( R \) does not have a regular orbit on \( V \). Then
	\[ |V| \leq \sum_{x \in R \setminus 1} |C_V(x)| < |R| \cdot |V|^{1/2}, \]
	hence \( |V|^{1/2} < |R| \), a contradiction.
\end{proof}

If \( G_0 \) lies in class \( \C_6 \), then it contains a normal \( r \)-subgroup \( R \)
for some prime \( r \neq p \) with the following properties (see~\cite[Section~4.6]{KL} for details).
We have \( |R/Z(R)| = r^{2m} \), \( m \geq 1 \), and if \( r \) is odd, then \( |Z(R)| = r \) and \( R \) is an extraspecial group, while if \( r = 2 \), then
\( |Z(R)| \leq 4 \) and \( R \) is either an extraspecial group or a central product of a cyclic group of order~4 and an extraspecial group.
The dimension of \( V \) over \( \F_q \) is \( a = r^m \), and \( R \) acts absolutely irreducibly on~\( V \).
Moreover, \( G_0 \) is contained in a subgroup \( N \leq \GaL_a(q) \),
such that \( R \) is normal in \( N \) and \( N/R \) is isomorphic to \( \mathrm{Sp}_{2m}(r) \) or to \( O_{2m}^\pm(2) \) if \( r = 2 \).
By Proposition~\ref{xukcl}~(i), the group \( H_0 \) also lies in the class \( \C_6 \). Note that the order of the subgroup of symplectic type \( R \)
can be determined from \( a \), so it is the same for both \( G_0 \) and \( H_0 \).

\begin{proposition}\label{c6}
	If \( m < 12 \), then \( G \) and \( H \) are \( \alt(25) \)-free.
	If \( m \geq 12 \), then \( G \) has a base of size at most \( 3 \), in particular, \( G = H \).
\end{proposition}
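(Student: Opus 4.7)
I split the proof on whether $m < 12$ or $m \geq 12$. In the small-$m$ regime the entire $\C_6$ overgroup $N$ of $G_0$ is already $\alt(25)$-free, while in the large-$m$ regime the gap between $|N|$ and the enormous $|V|$ is what I would exploit to construct a short base.

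\textbf{Case $m < 12$.} Here the idea is to view $N/R \simeq \mathrm{Sp}_{2m}(r)$ (or $O_{2m}^{\pm}(2)$ when $r = 2$) as a subgroup of $\GL_{2m}(r)$ and apply Proposition~\ref{altGL}: a hypothetical $\alt(25)$-section of $N/R$ would force $2m \geq 23$, hence $m \geq 12$, contradicting the hypothesis. Combined with $R$ being a nilpotent $r$-group, this makes the extension $N$ itself $\alt(25)$-free by closure under extensions. Since $G_0$ and $H_0$ both lie in some such $\C_6$ normalizer, $G = V \rtimes G_0$ and $H = V \rtimes H_0$ inherit the $\alt(25)$-free property.

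\textbf{Case $m \geq 12$.} The plan is to produce a base of $G$ of the form $(0, v_2, v_3)$, which by Proposition~\ref{basicProps}~(vi) yields $G^{(4)} = G$ and so $H = G$. A direct estimate $|R| \leq 4 r^{2m+1} \leq |V|^{1/2}$, using $|V| \geq 2^{r^m}$, lets me apply Lemma~\ref{regorb} to the nilpotent-of-class-two group $R$ acting faithfully and irreducibly on $V$; picking $v_2$ in a regular $R$-orbit forces $(G_0)_{v_2} \cap R = R_{v_2} = 1$. The quotient map then embeds $(G_0)_{v_2}$ into $G_0/R \leq N/R$, giving the order bound $|(G_0)_{v_2}| \leq |\mathrm{Sp}_{2m}(r)| \leq r^{m(2m+1)}$.

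The remaining step is to select $v_3 \in V$ with $((G_0)_{v_2})_{v_3} = 1$, which holds unless $v_3 \in C_V(g)$ for some nontrivial $g \in (G_0)_{v_2}$. By the union bound it suffices to show $\sum_{g \in (G_0)_{v_2} \setminus 1} |C_V(g)| < |V|$. Every such $g$ lies in $G_0 \setminus R$, so the crucial input is a uniform fixed-space bound $|C_V(g)| \leq |V|^{1/2}$ for $g \in G_0 \setminus R$. Once this is in hand, the comparison $r^{m(2m+1)} \cdot |V|^{1/2} < |V|$, valid for $m \geq 12$ by simple exponent arithmetic, closes the argument. I expect this fixed-space estimate to be the main obstacle of the plan; the natural approach is to exploit the Weil-type description of $V$ as an $N$-module, decomposing $V$ into the $r^m$ one-dimensional common eigenspaces of a maximal abelian subgroup $A \leq R$ and observing that any $g$ acting nontrivially on $R/Z(R)$ (as it must for $g \notin R \cdot \F_q^{\times}$) can fix at most $r^{m-1}$ of the corresponding characters of $A/Z(R)$, capping $\dim C_V(g)$ at $r^{m-1} \leq \dim V/2$; the technical subtlety lies in handling $g$ that normalize no convenient $A$, which I would resolve by first reducing to a $g$-stable maximal isotropic subspace of the symplectic space $R/Z(R)$.
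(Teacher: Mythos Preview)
Your treatment of $m < 12$ matches the paper's and is correct.

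For $m \geq 12$ there is a genuine gap: the uniform bound $|C_V(g)| \leq |V|^{1/2}$ for nontrivial $g \in (G_0)_{v_2}$ is false, and your eigenspace argument cannot establish it. The failure occurs exactly at the ``technical subtlety'' you flag. If $\bar A = A/Z(R)$ is your maximal isotropic subspace of the symplectic space $R/Z(R)$ and $g$ lies in the unipotent radical of its stabilizer in $\mathrm{Sp}_{2m}(r)$, then $g$ acts \emph{trivially} on $\bar A$ while acting nontrivially on $R/Z(R)$; such a $g$ fixes all $r^m$ eigencharacters, so counting fixed characters gives no information. Concretely, for $r = 3$ take the transvection $t \colon y_1 \mapsto y_1 x_1$ (fixing the remaining symplectic generators); a diagonal lift $T \in N$ acts on the eigenspace $V_a$, $a \in \F_3^{\,m}$, by the scalar $\zeta^{-a_1(a_1-1)/2}$, which equals $1$ for $a_1 \in \{0,1\}$ and $\zeta^{-1}$ for $a_1 = 2$. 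Hence $\dim C_V(T) = 2 \cdot 3^{m-1} = \tfrac{2}{3}\dim V$ and $|C_V(T)| = |V|^{2/3} > |V|^{1/2}$. Passing to a $g$-stable maximal isotropic does not rescue the argument, since $T$ already stabilizes $\bar A$; and for other $g$ (e.g.\ a generator of a non-split torus in $\mathrm{Sp}_{2m}(r)$) no $g$-stable maximal isotropic need exist at all.

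The paper follows the same two-step outline---a regular $R$-orbit gives $v$, then one seeks a regular orbit for $J = (G_0)_v$---but replaces your $|V|^{1/2}$ by the far weaker bound $|C_V(h)| \leq |V|^{1 - 1/(2m+2)}$, obtained from the Guralnick--Saxl theorem that $2m+2$ conjugates of any nontrivial $h$ generate $N$. This forces a genuine numerical fight: the resulting inequality $q^{r^m/(2m+2)} \leq |N/R|$ leaves only $r = 2$, $m \in \{12,13,14\}$, then only $r=2$, $q=3$, $m=12$, and that last case is disposed of by separating the (at most $2^{24}$) transvections of $O_{24}^{\varepsilon}(2)$ from the remaining elements and applying a sharper Guralnick--Saxl generation bound to each piece.
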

\begin{proof}
	Recall that \( G_0/R \) embeds into \( \GL_{2m}(r) \). If \( \alt(25) \) is a section of \( G \), then it is a section of \( \GL_{2m}(r) \) as well.
	By Proposition~\ref{altGL}, we have \( 2m \geq 23 \). This implies that if \( m < 12 \) then \( G \) (and similarly \( H \))
	is \( \alt(25) \)-free. So we may assume \( m \geq 12 \).

	Suppose that \( |R| > |V|^{1/2} \).
	Recall that \( |V| = q^{r^m} \), \( r \leq q-1 \) and \( |R| \leq r^{2m+2} \), therefore
	\[ q^{r^m/2} = |V|^{1/2} < |R| \leq r^{2m+2} \leq q^{2m+2}. \]
	This is impossible for \( m \geq 12 \), so \( |R| \leq |V|^{1/2} \) and \( R \) has a regular orbit on~\( V \) by Lemma~\ref{regorb}.

	Let \( v \in V \) be such that the stabilizer \( R_v \) is trivial and set \( J = (G_0)_v \). We will now prove that \( J \) has a regular orbit on \( V \),
	our argument following the argument in \cite[Lemma~3.6]{LS} (see also \cite[Proposition~5.6]{HLM}).

	Since \( R \cap J = 1 \), the group \( J \) is isomorphic to a subgroup of $\mathrm{Sp}_{2m}(r)$ or \( O^\pm_{2m}(2) \).
	Suppose that \( J \) has no regular orbit and therefore
	\begin{equation}\label{eqvup}
		|V| \leq \sum_{h \in J \setminus 1} |C_V(h)|.
	\end{equation}
	For \( h \in J \setminus 1 \), \cite[Theorem~4.1]{GS} implies that there are \( 2m+2 \) conjugates of \( h \) in \( N \) generating~\( N \).
	Hence
	\[ \dim C_V(h) \leq \left( 1 - \frac{1}{2m+2} \right) \dim V \]
	and \( |C_V(h)| \leq |V|^{1-1/(2m+2)} \). The obtained inequality and~\eqref{eqvup} imply
	\[ |V| \leq |J| \cdot |V|^{1-1/(2m+2)}. \]
	We derive that
	\begin{equation}\label{eqemb}
		q^{\frac{r^m}{2m+2}} = |V|^{1/(2m+2)} \leq |J| \leq |N/R| \leq |\GL_{2m}(r)| < r^{4m^2}.
	\end{equation}
	Since \( r \leq q-1 \), we obtain \( r^m < 4m^2(2m+2) \). The only integer solutions for that inequality for \( m \geq 12 \) and \( r \geq 2 \)
	are \( r = 2 \) and \( m = 12, 13, 14 \).

	Note that \( q \geq 3 \). Inequalities~\eqref{eqemb} imply
	\[ q^{\frac{2^m}{2m+2}} \leq |N/R| \leq \max \{ |\mathrm{Sp}_{2m}(2)|,\, |O^\pm_{2m}(2)| \}. \]
	A direct computation shows that this is possible only for \( q = 3 \) and \( m = 12 \).

	We are left with the case \( r = 2 \), \( q = 3 \), \( m = 12 \). By~\cite[Table~4.6.B]{KL},
	we have \( N/R \simeq O^\varepsilon_{24}(2) \), where $\varepsilon\in\{+,-\}$. Recall that \( J \) is isomorphic to a subgroup of \( O^\varepsilon_{24}(2) \),
	and consider an arbitrary element \( h \in J \setminus 1 \).
	By~\cite[Theorem~4.4]{GS}, there are \( m+3 = 15 \) conjugates of \( h \) which generate \( O^\varepsilon_{24}(2) \) unless \( h \) is a transvection,
	in which case there are \( 2m = 24 \) conjugates.
	There are at most \( 2^{24} \) transvections in \( O^\varepsilon_{24}(2) \), so at most \( 2^{24} \) elements \( h \in J \setminus 1 \)
	corresponding to a transvection; denote this set of elements by \( T \).

	Now, if \( h \in T \), there are \( 25 \) conjugates of \( h \) generating \( N \), hence
	\[ \dim C_V(h) \leq \left( 1 - \frac{1}{25} \right) \dim V \]
	and \( |C_V(h)| \leq |V|^{1-1/25} \).
	If \( h \not\in T \), there are \( 16 \) conjugates of \( h \) generating \( N \), so
	\[ \dim C_V(h) \leq \left( 1 - \frac{1}{16} \right) \dim V \]
	and \( |C_V(h)| \leq |V|^{1-1/16} \). We apply our estimates to inequality~\eqref{eqvup} and obtain
	\[ |V| \leq |T| \cdot |V|^{1-1/25} + (|J| - |T|) \cdot |V|^{1-1/16}. \]
	Recall that \( |T| \leq 2^{24} \), \( |J| \leq |O^-_{24}(2)| \) and \( |V| = 3^{2^{12}} \). Thus
	\[ 1 \leq 2^{24} \cdot 3^{-\frac{2^{12}}{25}} + |O^-_{24}(2)| \cdot 3^{-\frac{2^{12}}{16}}, \]
	which is a contradiction. Hence for any \( m \geq 12 \) the group \( J \) has a regular orbit
	and so \( G \) has a base of size at most~3. Now \( G = H \) by Proposition~\ref{basicProps}~(vi).
\end{proof}

The proposition above contradicts \( G \) being a counterexample, so we can move to the next class.

\subsection{\texorpdfstring{The class $\C_9$.}{The class C9.}}\label{ss:c9}

We need the following general statement about permutation groups.

\begin{proposition}\label{c9}
	Let \( P \) and \( Q \) be permutation groups on the same set \( V \),
	and suppose that they are \( 3 \)-equivalent. If \( P/Z(P) \) and \( Q/Z(Q) \) are almost simple groups, then
	either \( \soc(P/Z(P)) = \soc(Q/Z(Q)) \) or \( P \) and \( Q \) are \( \alt(25) \)-free.
\end{proposition}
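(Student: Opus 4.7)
The plan is to argue by contradiction: suppose \( S := \soc(P/Z(P)) \neq T := \soc(Q/Z(Q)) \) while, without loss of generality, \( P \) is not \( \alt(25) \)-free. Since \( \alt(25) \) is a nonabelian simple perfect group and \( Z(P) \) is abelian, any \( \alt(25) \) section of \( P \) descends to an \( \alt(25) \) section of the almost simple group \( P/Z(P) \); using that \( \out(S) \) is solvable by the Schreier conjecture (a consequence of CFSG) together with the perfectness of \( \alt(25) \), this section must in fact be a section of \( S \) itself. Hence the task reduces to deriving a contradiction from the joint assumptions that \( S \) contains an \( \alt(25) \)-section and that \( T \neq S \).

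Next, I would use 3-equivalence to reduce to a primitive common action. By Proposition~\ref{basicProps}~(ii), \( P \) and \( Q \) share their orbit decomposition and systems of imprimitivity on \( V \), and Proposition~\ref{basicEmb}~(i) and (ii) ensure that restricting to a common orbit and then factoring out a common maximal block system preserves 3-equivalence. One has to verify that the almost-simple-modulo-center property survives these reductions: on an orbit where the preimage of \( S \) acts nontrivially, the induced quotient remains almost simple modulo an abelian (scalar) subgroup, and similarly for \( Q \). After this bookkeeping we may assume that \( P \) and \( Q \) act primitively on a common set, are still 3-equivalent, and are still almost simple modulo their abelian centers.

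At this stage I would invoke the classification of 2-closures of primitive almost simple permutation groups in Theorem~2 of \cite{PS1992} together with the exceptional list in \cite{LPS88}: the socle of a primitive almost simple group is preserved under 2-equivalence except in an explicit short list of cases (alternating groups on \( k \)-subsets, certain classical groups in natural subspace/subset actions, Mathieu groups, and a few small-rank Lie type configurations). For each exceptional family one inspects the alternating sections: for classical exceptions, Proposition~\ref{altGL} forces the Lie rank to be at least \( 23 \), which puts such groups outside every exceptional entry on the list; for alternating exceptions the degree is \( n < 25 \); and for the sporadic and small Lie-type exceptions the \( \alt(d) \)-sections can be read off from Atlas data, all giving \( d < 25 \). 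Therefore no exceptional configuration survives under the assumption that \( P \) is not \( \alt(25) \)-free, and we conclude \( S = T \), contradicting our assumption.

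The main obstacle is that the classification results of \cite{PS1992,LPS88} are formulated for almost simple primitive groups rather than for their central extensions, so the technical heart of the argument is showing that 3-equivalence of \( P \) and \( Q \) (which already implies 2-equivalence) descends to a genuine 2-equivalence of the honest almost simple quotients on a suitable common orbit structure. The expected remedy is to argue that the two abelian central subgroups \( Z(P) \) and \( Z(Q) \) induce the same block system on \( V \) (because their orbit partitions are 2-invariant relations visible to both groups), so that passing to this block quotient yields a pair of primitive almost simple groups to which the classification applies directly.
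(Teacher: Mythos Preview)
Your overall strategy---reduce to a common primitive action and then invoke the Liebeck--Praeger--Saxl/Praeger--Saxl classification---matches the paper's. Two points sharpen the argument considerably.

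First, your ``main obstacle'' dissolves on its own. Once you have reduced (via Proposition~\ref{basicEmb}) to a nonsolvable primitive group $P$, the center $Z(P)$ is automatically trivial: a nontrivial normal abelian subgroup of a primitive group is transitive, hence regular, and the centralizer in $\sym(V)$ of a regular subgroup is that subgroup itself, forcing $P=Z(P)$ to be abelian. So after the reduction $P$ and $Q$ are genuinely almost simple primitive groups, and the classification applies directly. Your proposed remedy---passing to a block system coming from the orbits of $Z(P)$ and $Z(Q)$---is therefore unnecessary; as stated it would also require you to show those two centers have the \emph{same} orbit partition, which is not obvious.

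Second, you give away the extra strength of $3$-equivalence by dropping to the $2$-closure classification. The paper instead uses that for a primitive almost simple group $P$ one has $\soc(P)=\soc(P^{(3)})$ unless $P$ is $3$-transitive (\cite[Theorem~2]{LPS88}). Since $P^{(3)}=Q^{(3)}$, either $\soc(P)=\soc(Q)$ or both $P$ and $Q$ are $3$-transitive almost simple; by~\cite[Table~7.4]{CB} the latter have degree at most $24$ or socle $\mathrm{PSL}_2(u)$, hence are $\alt(25)$-free in either case. This single line replaces your proposed case-by-case inspection of the longer $2$-closure exceptional list.
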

\begin{proof}
	Recall that since \( P \) and \( Q \) are 3-equivalent, they have the same orbits and systems of imprimitivity by Proposition~\ref{basicProps}~(ii).
	There exists an orbit \( \Delta \subset V \) such that \( P^\Delta \) is nonsolvable.
    Groups \( P^\Delta \) and \( Q^\Delta \) are 3-equivalent by Proposition~\ref{basicEmb}~(i),
    hence \( Q^\Delta \) is also nonsolvable (otherwise \( P^\Delta \leq (Q^\Delta)^{(3)} \) would be solvable by~\cite{BPVV2021}).
	The group \( P^\Delta/Z(P^\Delta) \) is almost simple with the same socle as \( P \), and similar properties hold for \( Q^\Delta \).
	We may thus replace \( P \) and \( Q \) by \( P^\Delta \) and \( Q^\Delta \), and assume that \( P \) is transitive on~\( \Delta \).

	If \( P \) is imprimitive on \( \Delta \), then let \( \Pi \subset \Delta \) be the minimal nontrivial block of imprimitivity.
	If \( P^\Pi \) is nonsolvable, then \( P^\Pi/Z(P^\Pi) \) is almost simple with the same socle as \( P \), and hence again, we can
	replace \( P \) and \( Q \) by \( P^\Pi \) and \( Q^\Pi \) by Proposition~\ref{basicEmb}~(ii). If \( P^\Pi \) is solvable, then the action of \( P \) on the
	imprimitivity system \( \Delta/\Pi \) defined by \( \Pi \) is nonsolvable and almost simple, therefore we can replace our groups
	by \( P^{\Delta/\Pi} \) and \( Q^{\Delta/\Pi} \) by Proposition~\ref{basicEmb}~(ii). We can thus assume that \( P \) acts primitively on~\( \Delta \).

	Since \( P \) is a nonsolvable primitive group, \( Z(P) = 1 \). Hence \( P \) is almost simple and by~\cite[Theorem~2]{LPS88}, either
	\( \soc(P) = \soc(P^{(3)}) \) or \( P \) is 3-transitive. The same is true for \( Q \), and as \( P \) and \( Q \) are 3-equivalent, \( P \)
	is 3-transitive if and only if \( Q \) is 3-transitive. Since \( P^{(3)} = Q^{(3)} \), we either have \( \soc(P) = \soc(P^{(3)}) = \soc(Q^{(3)}) = \soc(Q) \),
	or \( P \) and \( Q \) are 3-transitive. In the latter case, \cite[Table~7.4]{CB} implies that \( |\Delta| \leq 24 \) or
	\( \soc(Q) = \mathrm{PSL}_2(u) \) for some prime power \( u \), which means that \( P \) and \( Q \) are \( \alt(25) \)-free.
\end{proof}

Let \( G_0 \) lie in \( \C_9 \). Since \( n > 16 \), the group \( H_0 \) also lies in \( \C_9 \) by Proposition~\ref{xukcl}~(ii).
Both \( G_0/Z(G_0) \) and \( H_0/Z(H_0) \) are almost simple groups.
By applying Proposition~\ref{c9} to \( P = G_0 \) and \( Q = H_0 \) we derive that \( H_0 \in \X \), which is the final contradiction.

\section{Concluding remarks}\label{s:end}

\subsection{}
As mentioned in the introduction, it is important from the computational point of view to have an upper bound on
the size of primitive sections of a permutation group from the class under consideration. Combining the main result of~\cite{BCP1982} with
our Corollary~\ref{c:main}, we obtain the following.

\begin{corollary}\label{c:orders}
	Suppose that \( G \) is an $\alt(d)$-free primitive permutation group with \( d \geq 25 \).
	Then the order of \( G^{(k)} \) is bounded from above by $n^c$ for every \( k \geq 4 \), where $c$ is a constant depending only on~$d$.
\end{corollary}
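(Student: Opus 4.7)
The plan is to observe that the statement follows almost immediately by combining Corollary~\ref{c:main} with the Babai--Cameron--P\'alfy bound of~\cite{BCP1982}. The only thing that requires a brief argument is verifying that the closure $G^{(k)}$ still satisfies the hypotheses of the Babai--Cameron--P\'alfy theorem, namely that it is $\alt(d)$-free and primitive of the same degree~$n$.

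First, I would note that $G^{(k)}$ acts on the same underlying set, hence has degree $n$. Since $G$ is primitive and $k \geq 4 \geq 2$, Proposition~\ref{basicProps}(ii) applied to the $k$-equivalent pair $G$ and $G^{(k)}$ shows that $G^{(k)}$ has the same systems of imprimitivity as $G$; in particular $G^{(k)}$ is again primitive. Next, since $G$ is $\alt(d)$-free with $d \geq 25$ and $k \geq 4$, Corollary~\ref{c:main} (which is the content of Theorem~\ref{t:main} specialized to the class of $\alt(d)$-free groups) yields that $G^{(k)}$ is $\alt(d)$-free as well.

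With both hypotheses in place, the main theorem of~\cite{BCP1982} applies to $G^{(k)}$ directly and produces the desired bound $|G^{(k)}| \leq n^c$, where $c$ is a constant depending only on~$d$. There is no genuine obstacle: all the work lies in Corollary~\ref{c:main} itself; the present corollary is merely the observation that, once the $\alt(d)$-free property is preserved under $k$-closure, the classical polynomial order bound of Babai--Cameron--P\'alfy automatically transfers from $G$ to $G^{(k)}$.
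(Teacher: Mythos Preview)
Your proposal is correct and matches the paper's own treatment: the corollary is stated there as an immediate combination of Corollary~\ref{c:main} with the Babai--Cameron--P\'alfy bound, and you have simply made explicit the easy verification that $G^{(k)}$ remains primitive of degree~$n$ via Proposition~\ref{basicProps}(ii).
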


\subsection{} Example (ii) from the introduction shows that for $k=4$ (and even $k=5$), the best possible value of $d$ providing that the $k$-closure of
every $\alt(d)$-free group is again $\alt(d)$-free is $d=25$. It is clear that for greater value of $k$, the value of $d$ can be reduced. It
would be interesting to find the minimal possible $d$ as a function on $k$, or at least to find the minimal possible $k$ such that the
$k$-closure of $\alt(d)$-free group is $\alt(d)$-free for all $d\geq5$.

\subsection{}
The concept of the $\alt(d)$-free groups provides a natural way to restrict the composition factors of a group.
Namely, only nonabelian composition factors that do not have linear representations of bounded dimension must be excluded. For example, if
$\X$ is the class of $\alt(25)$-free groups, then the list of simple groups of $\X$ includes the groups of order $p$ for all primes $p$, all
the sporadic groups and exceptional groups of Lie type, as well as the classical groups of bounded degree (cf.\ Proposition~\ref{altGL}), and
the alternating groups of degree less than~$25$.

It is worth noting that historically, there were other ways to restrict composition factors of a group, see the classical papers by Luks
\cite{L1982} and Babai--Luks \cite{BL1983}.  Namely, let $\Gamma_d$ with $d\geq4$ denote the class of finite groups every nonabelian
composition factor of which can be embedded in $\alt(d)$. In fact, the restriction in the Luks paper was even stronger: it requires that
every (not necessarily nonabelian) composition factor can be embedded in $\alt(d)$. In our definition of $\Gamma_d$ we follow the Babai--Luks
paper, since it provides a wider class of groups.

Obviously, every group from $\Gamma_d$ is $\alt(d+1)$-free. Therefore, if $k\geq4$ and $d\geq24$, then $G^{(k)}$ is an $\alt(d+1)$-free
group for every $G\in\Gamma_d$ by virtue of  Corollary~\ref{c:main}. The question is whether $G^{(k)}\in\Gamma_d$?

Inspecting the proof of Theorem~\ref{t:main}, one can see that applying our arguments to a group $G\in\Gamma_d$ for $d\ge24$, it follows
that for $k\geq4$, either $G^{(k)}$ belongs to $\Gamma_d$, or $G$ is a basic affine group such that the zero stabilizer $G_0$ lies in one of
the Aschbacher classes $\C_6$ and~$\C_9$ (as in Subsections~\ref{ss:c6} and~\ref{ss:c9}).

For $G_0\in\C_9$, it is not hard to show that $G^{(4)}\in\Gamma_d$. Here the only problem is (cf.\ the proof of Proposition~\ref{c9}) the
case when $G_0$ is $3$-transitive. Since $d\ge24$, we may assume that $\soc(G_0)=PSL_2(u)$. Applying the results from \cite{Lee1,Lee2} that
describe the irreducible quasisimple linear groups acting without regular orbits, we conclude that for a finite number of exceptions
$b(G_0)\leq2$, and we are done in view of Proposition~\ref{basicProps}(vi).

For $G_0\in\C_6$, the situation is more complicated. Let us keep notation of Proposition~\ref{c6}. If $m\geq12$, then $G^{(4)}=H^{(4)}$ and
everything is fine. However, the inequality $m<12$ on dimension of $G_0$ can only guarantee that $m<12$ for $H_0$, that is the dimension
(but not the size) of $H_0$ is bounded. We do not know how to overcome this problem for $k=4$. Nevertheless, it can be done for $k\geq15$.
Indeed, according to \cite[Proposition~5.6]{HLM}, in our situation, for each $m$ (even $m<12$), the base number of $G_0$ is at most~$13$, so
the base number of $G$ is at most $14$. Now Proposition~\ref{basicProps}(vi) implies that $G^{(15)}=H^{(15)}$, as required. Thus, the
following is true.

\begin{corollary}\label{c:gammad}
	If $G\in\Gamma_d$ with $d\geq24$, then $G^{(k)}\in\Gamma_d$ for $k\geq15$.
\end{corollary}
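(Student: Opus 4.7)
The plan is to revisit the proof of Theorem~\ref{t:main} and upgrade each step from preserving ``\( \alt(d+1) \)-freeness'' to preserving the finer invariant of being in \( \Gamma_d \). Since any \( G \in \Gamma_d \) is \( \alt(d+1) \)-free and \( d+1 \geq 25 \), Corollary~\ref{c:main} already gives that \( G^{(k)} \) is \( \alt(d+1) \)-free for \( k \geq 4 \), so we only need to control the \emph{identity} of the nonabelian composition factors, not just their size. First, I would check that in the proof of Theorem~\ref{t:main} every reduction away from the Aschbacher classes \( \C_6 \) and \( \C_9 \) in fact preserves nonabelian composition factors. This applies to: the reduction to primitive basic groups (where composition factors of direct and wreath product closures are inherited from the factors), the almost simple case (where \( \soc(G) = \soc(H) \) and \( H/\soc(G) \) embeds into \( \out(\soc(G)) \)), the diagonal-action case (where \( H/\soc(G) \) embeds into \( \out(T) \times \sym(m) \)), and the affine cases handled by Corollary~\ref{clTens}, Corollary~\ref{clPower}, Proposition~\ref{clSubf} together with the containment arguments for \( \C_1 \), \( \C_2 \), \( \C_8 \) and the \( \mathrm{SL}_a(q) \leq G_0 \) situation. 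In each of these the nonabelian composition factors of \( H \) are the same as (or a subset of sections of) those of \( G \), hence \( H \in \Gamma_d \).

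For \( G_0 \in \C_9 \), Proposition~\ref{c9} applied to \( P = G_0 \) and \( Q = H_0 \) yields either \( \soc(G_0/Z(G_0)) = \soc(H_0/Z(H_0)) \), in which case composition factors agree and \( H \in \Gamma_d \) is immediate, or \( G \) is \( 3 \)-transitive. In the \( 3 \)-transitive case, \cite[Table~7.4]{CB} together with \( n > 24 \) forces \( \soc(G_0) \cong \mathrm{PSL}_2(u) \) for some prime power \( u \). Here I would invoke the classification of irreducible quasisimple linear groups without regular orbits due to Lee~\cite{Lee1,Lee2}: with finitely many exceptions, \( G_0 \) admits a regular orbit on \( V \), so that \( \{0, v\} \) is a base of size \( 2 \) for \( G \) and Proposition~\ref{basicProps}(vi) gives \( G^{(3)} = G \). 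The finitely many exceptions are checked by direct inspection. Thus in the \( \C_9 \) case \( G^{(k)} = G \in \Gamma_d \) already for \( k \geq 3 \).

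For \( G_0 \in \C_6 \), I split on the parameter \( m \) of Proposition~\ref{c6}. When \( m \geq 12 \), Proposition~\ref{c6} gives a base of \( G \) of size at most \( 3 \), so \( G^{(4)} = G \in \Gamma_d \). When \( m < 12 \), the ambient bound on the dimension of \( G_0 \) alone does not control the composition factors of \( H_0 \), since a priori \( H_0 = G_0^{(3)} \cap \GL(V) \) could involve larger almost simple sections than \( G_0 \). To circumvent this, I appeal to \cite[Proposition~5.6]{HLM}, which bounds the base number of any \( G_0 \in \C_6 \) on \( V \) by \( 13 \) uniformly in \( m \). Adjoining the zero vector produces a base of the affine group \( G \) of size at most \( 14 \), and Proposition~\ref{basicProps}(vi) gives \( G^{(15)} = G \), which trivially lies in \( \Gamma_d \). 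Combining the \( \C_6 \) and \( \C_9 \) analyses with the unified treatment of the other cases yields the desired conclusion for all \( k \geq 15 \).

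The main obstacle is precisely the \( \C_6 \) case with \( m < 12 \): this is what forces the threshold up from \( k = 4 \) (as in Corollary~\ref{c:main}) to \( k = 15 \). Any improvement to \( k = 4 \) would require either a refined composition-factor analysis of \( G_0^{(3)} \cap \GL(V) \) for small-\( m \) symplectic-type normalizers, or a sharper base-size estimate uniform across all such \( G \).
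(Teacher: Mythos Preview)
Your proposal is correct and follows essentially the same route as the paper's own argument in Section~\ref{s:end}: revisit the proof of Theorem~\ref{t:main}, observe that all reductions outside $\C_6$ and $\C_9$ already preserve the nonabelian composition factors (not merely $\alt(d+1)$-freeness), handle $\C_9$ via the 3-transitive dichotomy inside the proof of Proposition~\ref{c9} together with the Lee classification~\cite{Lee1,Lee2}, and handle $\C_6$ by splitting on $m\geq 12$ (Proposition~\ref{c6}) versus $m<12$ (\cite[Proposition~5.6]{HLM} giving $b(G_0)\leq 13$, hence $G^{(15)}=G$).

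Two small slips worth cleaning up: in the $\C_9$ paragraph it is not $G$ but a suitable primitive constituent of $G_0$ (as produced in the proof of Proposition~\ref{c9}) that is $3$-transitive, and the relevant degree bound is the degree of that reduced action, not $n=|V|$; the case where this reduced degree is at most $24$ must be disposed of separately (there $\soc(H_0/Z(H_0))$ embeds in $\sym(24)\leq\sym(d)$, so $H_0\in\Gamma_d$ directly). Neither point affects the structure of the argument.
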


It would certainly be interesting to prove the similar statement for $k<15$ if it is possible.

\subsection{}
Let us finish by returning to the computational problem from which we started the introduction. In the context of
the present paper, it can be formulated as follows: find the $k$-closure of a given $\alt(d)$-free permutation group in polynomial time in
its degree for fixed $k$ and~$d$. Using~\cite[Theorem 3.1]{PV2024},  this problem can be reduced in polynomial time to finding (a) the
$k$-closure of every primitive basic $\alt(d)$-free group,  and (b) the intersection of the $k$-closure of an $\alt(d)$-free group with any
other $\alt(d)$-free group. Problem~(b) can be solved by the Babai--Luks algorithm for all $k$ and $d$. Problem (a) is open for now. We believe that the results of this paper could be used to solve problem~(a).

\section{Acknowledgements}

The authors thank A.~Mar\'oti for helpful suggestions, improving the treatment of class~\( \C_6 \) in Subsection~\ref{ss:c6}.

The work is supported by the Russian Science Foundation, project no.~24-11-00127, \url{https://rscf.ru/en/project/24-11-00127/}.


\begin{thebibliography}{1}

\bibitem{A1984}
M.~Aschbacher, \emph{On the maximal subgroups of the finite classical groups}, Invent. math. \textbf{76}, 469--514 (1984).

\bibitem{B2015}
L. Babai, {\em Groups, Graphs, Algorithms: The Graph Isomorphism Problem}, Proc. ICM 2018, Rio de Janeiro, Vol. 3, 3303--3320 (see also L.~Babai, {\em Graph Isomorphism in Quasipolynomial Time} (2016), arXiv:1512.03547).

\bibitem{BCP1982}
L.~Babai, P. J. Cameron, P. P. P\'alfy, {\em On the orders of primitive groups with restricted nonabelian composition factors}, J. Algebra {\bf 79}, 161--168 (1982).

\bibitem{BL1983}
L.~Babai, E.~M.~Luks,  \emph{Canonical labeling of graphs}, Proc. of the Fifteenth Annual ACM Symposium on Theory of Computing, 171--183 (1983).

\bibitem{CB}
P.~J. Cameron, \emph{Permutation groups}, LMS Textbooks in Math. {\bf 45}, Cambridge University Press (1999).

\bibitem{DM}
J.~D.~Dixon, B.~Mortimer, {\em Permutation Groups}, Graduate Texts in Mathematics {\bf 163}, Springer (1996).

\bibitem{EP2001}
S.~Evdokimov, I.~Ponomarenko, Two-closure of odd permutation group in polynomial time, \emph{Discrete Math.} \textbf{235}(1-3), 221--232 (2001).

\bibitem{Faw}
J.~B. Fawcett, {\em The base size of a primitive diagonal group}, J. Algebra {\bf 375}, 302--321 (2013)

\bibitem{GS}
R.~M. Guralnick, J. Saxl, {\em Generation of finite almost simple groups by conjugates}, J. Algebra {\bf 268}:2, 519--571 (2003).

\bibitem{HLM}
Z.~Halasi, M.~W. Liebeck, A. Mar\'{o}ti, {\em Base sizes of primitive groups: bounds with explicit constants}, J. Algebra {\bf 521}, 16--43 (2019).

\bibitem{KK}
L. A.~Kalu{\v z}nin, M. H.~Klin, {\em On some numerical invariants of permutation groups}, Latv. Mat. E\v zegod. {\bf 18}:1, 81--99 (1976) (in Russian).

\bibitem{KL}
P.~B. Kleidman, M.~W. Liebeck, \emph{The subgroup structure of the finite classical groups}, Cambridge University Press (1990).

\bibitem{Lee1}
M. Lee, {\em Regular orbits of quasisimple linear groups {I}}, J. Algebra {\bf 586}, 1122--1194 (2021).

\bibitem{Lee2}
M. Lee, {\em Regular orbits of quasisimple linear groups {II}}, J. Algebra {\bf 586}, 643--717 (2021).

\bibitem{LPS88}
M.~W.~Liebeck, C.~E.~Praeger, J.~Saxl, {\em On the 2-closures of finite permutation groups}, J. London Math. Soc., II. Ser. {\bf 37}:2, 241--252 (1988).

\bibitem{LPS1988b}
M.~W.~Liebeck, C.~E.~Praeger, J.~Saxl, {\em On the {O}'{N}an-{S}cott theorem for finite primitive permutation groups}, J. Austral. Math. Soc. Ser. A {\bf 44}:3, 389--396 (1988).

\bibitem{LS}
M. W. Liebeck, A. Shalev, {\em  Bases of primitive linear groups}, J. Algebra {\bf 252}:1, 95--113 (2002).

\bibitem{L1982}
E. M. Luks, \emph{Isomorphism of graphs of bounded valence can be tested in polynomial time}, J. Comput. Syst. Sci. {\bf 25}:1,  42--65 (1982).

\bibitem{BPVV2021}
E.~O'Brien, I.~Ponomarenko, A.~V.~Vasil'ev, E.~Vdovin, \emph{The $3$-closure of a solvable permutation group is solvable}, J.~Algebra {\bf 607}, part~A, 618--637 (2022).

\bibitem{Pal1982}
P.~P.~P\'{a}lfy, \emph{A polynomial bound for the orders of primitive solvable groups}, J. Algebra {\bf 77}:1, 127--137 (1982).

\bibitem{P1994}
I.~Ponomarenko, \emph{Graph isomorphism problem and 2-closed permutation groups}, Appl. Algebra Engrg. Comm. Comput. {\bf 5}, 9--22 (1994).

\bibitem{PV2020}
I.~Ponomarenko, A.~V.~Vasil'ev, \emph{Two-closures of supersolvable permutation groups in polynomial time}, Comput. Complexity {\bf 29}, 5 (2020).

\bibitem{PV2024}
I.~Ponomarenko, A.~V.~Vasil'ev,  \emph{On computing the closures of solvable permutation groups}, Internat. J. Algebra Comput. {\bf 34}:3, 137--145 (2024). 

\bibitem{PS1992}
C. E. Praeger, J. Saxl, {\em Closures of finite primitive permutation groups}, Bull. London Math. Soc. {\bf 24}:3, 251--258 (1992).

\bibitem{PSn2018}
C. E.~Praeger, C.~Schneider, \emph{Permutation groups and Cartesian decompositions}, London Mathematical Society Lecture Note Series: 449 (2018).

\bibitem{PS1997}
L.~Pyber, A.~Shalev, \emph{Asymptotic results for primitive permutation groups}, J.~Algebra {\bf 188}:1, 103--124 (1997).

\bibitem{Sere2002}
\'{A}.~Seress, \emph{Permutation group algorithms} (Cambridge University Press, Cambridge, 2002).

\bibitem{PV2021}
A.~V.~Vasil'ev, I.~Ponomarenko,  {\em The closures of wreath products in product action}, Algebra Logic {\bf 60}:3, 188--195 (2021).

\bibitem{Xu2006}
J.~Xu, \emph{On closures of finite permutation groups}, PhD Thesis, University of Western Australia (2006).
	
\bibitem{XuGLP2011}
J.~Xu, M.~Giudici,  C.~H.~Li, C.~E.~Praeger, \emph{Invariant relations and Aschbacher classes of finite linear groups}, Electron. J. Combin. {\bf 18}:1, \#P225 (2011).
	
\bibitem{Wie1964}
H.~Wielandt, \emph{Zusammengesetzte Gruppen endlicher Ordnung}, Vorlesung an der Universit\"at T\"ubingen im Wintersemester 1963/64. In: Huppert, B., Schneider, H. (eds.) Helmut Wielandt: Mathematical Works, Vol. 1: Group theory
De Gruyter, Berlin, 607--655 (1994).
	
\bibitem{Wielandt1969}
H.~Wielandt, \emph{Permutation groups through invariant relations and invariant functions}, The Ohio State University (1969).

\bibitem{Wielandt1969a}
H.~Wielandt, \emph{Permutation representations}, Illinois J. Math. \textbf{13}:1, 91--94 (1969).

\end{thebibliography}
\end{document}